\pdfoutput=1
\documentclass[a4paper,12pt]{article}
\usepackage[text={170mm,254mm},centering]{geometry}

\usepackage[utf8]{inputenc}
\usepackage[T1]{fontenc}

\usepackage{amsmath} 
\usepackage{amsthm}
\usepackage{amsfonts}
\usepackage{bm}

\usepackage{enumitem}

\usepackage{graphicx}
\usepackage{rotating}
\usepackage{url}

\usepackage{tikz}
\usetikzlibrary{calc}
\usetikzlibrary{backgrounds}

\usepackage{authblk}
\usepackage{subfigure}
 
\theoremstyle{plain}
\newtheorem{theorem}{Theorem}[section]
\newtheorem{lemma}[theorem]{Lemma}

\newtheorem{proposition}[theorem]{Proposition}
\newtheorem{conjecture}[theorem]{Conjecture}
\theoremstyle{definition}
\newtheorem{definition}{Definition}[section]

\newtheorem{example}{Example}[section]

\DeclareMathOperator{\Ch}{Ch}
\DeclareMathOperator{\code}{code}
\DeclareMathOperator{\len}{len}
\DeclareMathOperator{\win}{win}
\DeclareMathOperator{\hex}{\mathit h}
\DeclareMathOperator{\mcd}{mcd}
\newcommand{\suma}{{\rm sum}}
\newcommand{\avg}{{\rm avg}}
\newcommand{\cd}{{\rm cd}}
\newcommand{\ex}{{\rm ex}}

\newcommand{\classB}{\mathcal{B}}
\newcommand{\classBcc}{\mathcal{B}^*}
\newcommand{\classBnb}{\mathcal{B}'}
\newcommand{\classF}{\mathcal{F}}
\newcommand{\classFcc}{\mathcal{F}^*}
\newcommand{\classFnb}{\mathcal{F}'}


\title{Convexity Deficit of Benzenoids}

\author[1,2,3]{Nino Ba\v{s}i\'{c}}
\author[4,5]{Sarah Berkemer}
\author[4]{J\"{o}rg Fallmann}
\author[6]{Patrick W. Fowler}
\author[4]{Thomas Gatter}
\author[1,2,3,12]{Toma\v{z} Pisanski}
\author[4,5,7]{Nancy Retzlaff}
\author[4,5,8,9,10]{Peter F. Stadler}
\author[1,11]{Sara S. Zemlji\v{c}}

\affil[1]{FAMNIT, University of Primorska, Koper, Slovenia}
\affil[2]{Institute of Mathematics, Physics and Mechanics, Ljubljana, Slovenia}
\affil[3]{IAM, University of Primorska, Koper, Slovenia}
\affil[4]{Bioinformatics Group, Department for Computer Science, and
  Interdisciplinary Center for Bioinformatics, Leipzig University, Germany}
\affil[5]{Max-Planck-Institute for Mathematics in the Sciences, Leipzig,
  Germany}
\affil[6]{Department of Chemistry, University of Sheffield,
  Sheffield S3 7HF, UK}
\affil[7]{Institute for Infrastructure and Resources Management,
  Leipzig University, Germany}
\affil[8]{Institute for Theoretical Chemistry, University of Vienna, Austria}
\affil[9]{Facultad de Ciencias, Universidad National de Colombia, Sede
  Bogot{\'a}, Colombia}
\affil[10]{Santa Fe Institute, Santa Fe, USA}
\affil[11]{Faculty of Mathematics, Physics and Informatics,
  Comenius University, Bratislava, Slovakia}
\affil[12]{Faculty of Mathematics and Physics,
  University of Ljubljana, Ljubljana, Slovenia}

\date{March 20, 2020}

\begin{document}

\maketitle

\begin{center} 
\textit{This paper is dedicated to the memory of Dr.\ Edward C.\ Kirby (1934--2019)\\
colleague, collaborator and friend of several of the authors.}
\end{center}

\begin{abstract}
  In 2012, a family of benzenoids was introduced by Cruz, Gutman, and Rada,
  which they called \emph{convex benzenoids}. In this paper we introduce
  the \emph{convexity deficit}, a new topological index intended for
  benzenoids and, more generally, fusenes. This index measures by how much
  a given fusene departs from convexity.  It is defined in terms of the
  boundary-edges code.  In particular, convex benzenoids are exactly the
  benzenoids having convexity deficit equal to $0$.  \emph{Quasi-convex}
  benzenoids form the family of non-convex benzenoids that are closest to
  convex, i.e., they have convexity deficit equal to $1$.  Finally, we
  investigate convexity deficit of several important families of
  benzenoids.
\end{abstract}

\noindent
\textbf{Keywords:} Benzenoid, fusene, convexity deficit, convex benzenoid,
quasi-convex benzenoid, pseudo-convex benzenoid.

\section{Introduction}

\emph{Benzenoids} form an important family of graphs and molecules.
Polycyclic (aromatic) hydrocarbons
\cite{clar1964,clar1964a,dias1987,dias1988}, of which the benzenoids form a
subset, are important molecular systems with a rich organic chemistry
\cite{fetzer2000,tomlinson1971} characterised by specific reactivity
\cite{lloyd1989,taylor1990}, spectra \cite{Murrell}, and photophysics.
They occur naturally, geologically and as by-products of natural and
anthropogenic combustion processes, with considerable implications for the
environment \cite{abdel2016} and human health \cite{gelboin1978} and have
been postulated as significant contributors to the carbon inventory in the
wider Universe \cite{allamandola1989}. There has also been a huge amount of
interest over several decades in the graph theory of benzenoids and related
structures and its application to prediction of physical and chemical
properties (see, e.g., the textbooks
\cite{cyvin1994,cyvin1991,cyvin1988,Garratt1986,gutman1989,trinajstic1992}).
Much of the mathematical chemistry literature is concerned with prediction
or rationalisation of electronic structure, but there is also interest in
classification of the shapes available to benzenoids.  As pointed out
before \cite{deza1990}, molecular shape is intimately associated with
molecular electric and steric properties, such as quadrupole moment or van
der Waals envelope, which are implicated in structure-activity
relationships from odour perception \cite{odour1957} to carcinogenicity
\cite{BalKauKosBal1980,gelboin1978,Jerina1980,KnoSzyJerTri1983}.  Codes
based on boundaries seem especially suitable for systematising our notions
of shapes of benzenoids.  The reader is referred to the books
\cite{cyvin_1988,gutman1989} for definitions and basic facts.

\section{Preliminaries}

We begin by giving a mathematical definition of a \emph{fusene}
\cite{Brinkmann2002,Brinkmann2007}.  The class of fusenes contains as a
proper subclass the class of benzenoids.
\begin{definition}
  A \emph{fusene} is a simple subcubic $2$-connected plane graph such that
  all bounded faces are hexagons and all vertices not on the outer face
  have degree $3$.
\end{definition} 
Benzenoids can be now defined in terms of fusenes.
\begin{definition}
  A fusene that can be embedded in the infinite hexagonal lattice is called
  a \emph{benzenoid}.
\end{definition} 
In other words, benzenoids are those fusenes which are also subgraphs of
the infinite hexagonal lattice.

\begin{example}
  Figure~\ref{fig:1} shows four subcubic plane graphs. Pentalene is not a
  fusene, because its bounded faces are pentagons.  Biphenyl is not a
  fusene because it is not $2$-connected. Anthracene and $[6]$helicene are
  both fusenes.  Anthracene is also a benzenoid, whilst $[6]$helicene is
  not.
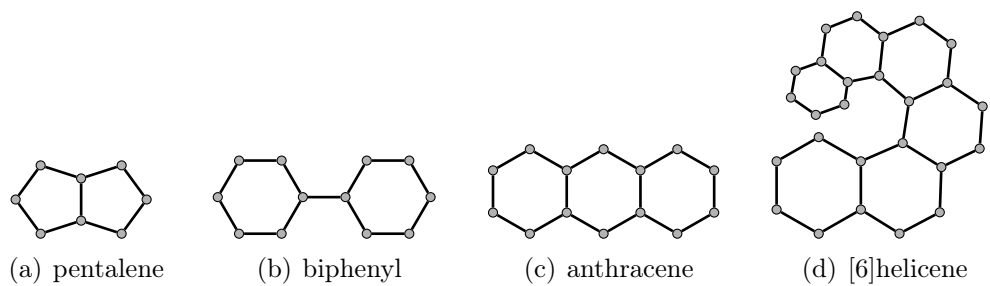
\begin{figure}[!htbp]
\centering
\subfigure[pentalene]{\;\begin{tikzpicture}[scale=-0.4]
\tikzstyle{every node} = [inner sep=1.2, draw, circle,fill=gray!60]
\tikzstyle{edge} = [draw, line width=1.0]
\tikzstyle{periedge} = [draw, line width=1.0]
\pgfmathsetmacro{\k}{360 / 5}
\node (a1) at (0, 0) {};
\node (a2) at ($ (a1) + (90:1.4) $) {};
\node (a3) at ($ (a2) + ({90 + \k}:1.4) $) {};
\node (a4) at ($ (a3) + ({90 + 2*\k}:1.4) $) {};
\node (a5) at ($ (a4) + ({90 + 3*\k}:1.4) $) {};
\node (b3) at ($ (a2) + ({90 - \k}:1.4) $) {};
\node (b4) at ($ (b3) + ({90 - 2*\k}:1.4) $) {};
\node (b5) at ($ (b4) + ({90 - 3*\k}:1.4) $) {};
\draw[edge] (a1) -- (a2) -- (a3) -- (a4) -- (a5) -- (a1);
\draw[edge] (a2) -- (b3) -- (b4) -- (b5) -- (a1);
\end{tikzpicture}\;}
\quad
\subfigure[biphenyl]{\begin{tikzpicture}[scale=0.4]
\tikzstyle{every node} = [inner sep=1.2, draw, circle,fill=gray!60]
\tikzstyle{edge} = [draw, line width=1.0]
\tikzstyle{periedge} = [draw, line width=1.0]
\pgfmathsetmacro{\k}{60}
\node (a1) at (0, 0) {};
\node (a2) at ($ (a1) + (120:1.4) $) {};
\node (a3) at ($ (a2) + (120+\k:1.4) $) {};
\node (a4) at ($ (a3) + (120+2*\k:1.4) $) {};
\node (a5) at ($ (a4) + (120+3*\k:1.4) $) {};
\node (a6) at ($ (a5) + (120+4*\k:1.4) $) {};
\draw[edge] (a1) -- (a2) -- (a3) -- (a4) -- (a5) -- (a6) -- (a1);
\node (b1) at (1.4, 0) {};
\node (b2) at ($ (b1) + (-120+\k:1.4) $) {};
\node (b3) at ($ (b2) + ({-120+2*\k}:1.4) $) {};
\node (b4) at ($ (b3) + ({-120+3*\k}:1.4) $) {};
\node (b5) at ($ (b4) + ({-120+4*\k}:1.4) $) {};
\node (b6) at ($ (b5) + ({-120+5*\k}:1.4) $) {};
\draw[edge] (b1) -- (b2) -- (b3) -- (b4) -- (b5) -- (b6) -- (b1);
\draw[edge] (a1) -- (b1);
\end{tikzpicture}}
\quad
\subfigure[anthracene]{\begin{tikzpicture}[scale=0.4]
\tikzstyle{every node} = [inner sep=1.2, draw, circle,fill=gray!60]
\tikzstyle{edge} = [draw, line width=1.0]
\tikzstyle{periedge} = [draw, line width=1.0]
\node (a1) at (0, 0) {};
\node (a2) at ($ (a1) + (90:1.4) $) {};
\node (a3) at ($ (a2) + (150:1.4) $) {};
\node (a4) at ($ (a3) + (210:1.4) $) {};
\node (a5) at ($ (a4) + (-90:1.4) $) {};
\node (a6) at ($ (a5) + (-30:1.4) $) {};
\node (b1) at ($ (a1) + (-30:1.4) $) {};
\node (b2) at ($ (b1) + (30:1.4) $) {};
\node (b3) at ($ (b2) + (90:1.4) $) {};
\node (b4) at ($ (b3) + (150:1.4) $) {};
\node (c1) at ($ (b2) + (-30:1.4) $) {};
\node (c2) at ($ (c1) + (30:1.4) $) {};
\node (c3) at ($ (c2) + (90:1.4) $) {};
\node (c4) at ($ (c3) + (150:1.4) $) {};
\path[edge] (a1) -- (a2) -- (a3) -- (a4) -- (a5) -- (a6) -- (a1);
\path[edge] (a1) -- (b1) -- (b2) -- (b3) -- (b4) -- (a2);
\path[edge] (b2) -- (c1) -- (c2) -- (c3) -- (c4) -- (b3);
\end{tikzpicture}}
\quad
\subfigure[{$[6]$helicene}]{\label{fig:6helicene}\begin{tikzpicture}[scale=0.4]
\tikzstyle{every node} = [inner sep=1.2, draw, circle,fill=gray!60]
\tikzstyle{edge} = [draw, line width=1.0]
\tikzstyle{periedge} = [draw, line width=1.0]
\node (a1) at (0, 0) {};
\node (a2) at ($ (a1) + (90:1.6) $) {};
\node (a3) at ($ (a2) + (150:1.6) $) {};
\node (a4) at ($ (a3) + (210:1.6) $) {};
\node (a5) at ($ (a4) + (-90:1.6) $) {};
\node (a6) at ($ (a5) + (-30:1.6) $) {};
\path[edge] (a1) -- (a2) -- (a3) -- (a4) -- (a5) -- (a6) -- (a1);
\node (b1) at ($ (a1) + (-30-2:1.5) $) {};
\node (b2) at ($ (b1) + (30-2:1.5) $) {};
\node (b3) at ($ (b2) + (90-2:1.5) $) {};
\node (b4) at ($ (b3) + (150-2:1.5) $) {};
\path[edge] (a1) -- (b1) -- (b2) -- (b3) -- (b4) -- (a2);
\node (c1) at ($ (b3) + (30-4:1.4) $) {};
\node (c2) at ($ (c1) + (90-4:1.4) $) {};
\node (c3) at ($ (c2) + (150-4:1.4) $) {};
\node (c4) at ($ (c3) + (210-4:1.4) $) {};
\path[edge] (b3) -- (c1) -- (c2) -- (c3) -- (c4) -- (b4);
\node (d1) at ($ (c3) + (90-6:1.3) $) {};
\node (d2) at ($ (d1) + (150-6:1.3) $) {};
\node (d3) at ($ (d2) + (210-6:1.3) $) {};
\node (d4) at ($ (d3) + (270-6:1.3) $) {};
\path[edge] (c3) -- (d1) -- (d2) -- (d3) -- (d4) -- (c4);
\node (e1) at ($ (d3) + (150-8:1.1) $) {};
\node (e2) at ($ (e1) + (210-8:1.1) $) {};
\node (e3) at ($ (e2) + (270-8:1.1) $) {};
\node (e4) at ($ (e3) + (330-8:1.1) $) {};
\path[edge] (d3) -- (e1) -- (e2) -- (e3) -- (e4) -- (d4);
\node (f1) at ($ (e3) + (210-10:0.9) $) {};
\node (f2) at ($ (f1) + (270-10:0.9) $) {};
\node (f3) at ($ (f2) + (330-5:1) $) {};
\node (f4) at ($ (f3) + (30-10:1) $) {};
\path[edge] (e3) -- (f1) -- (f2) -- (f3) -- (f4) -- (e4);
\end{tikzpicture}}
\caption{Examples of subcubic plane graphs, two of which are fusenes.}
\label{fig:1}
\end{figure}
\end{example}

Let us denote the class of all benzenoids by $\classB$ and the class of all
fusenses by $\classF$.  The \emph{inner dual} of a plane graph is its dual
graph with the vertex that corresponds to the outer face removed.  A
\emph{catacondensed} fusene is a fusene whose inner dual is a tree. Fusenes
that are not catacondensed are called \emph{pericondensed}. We will denote
the class of all catacondensed fusenes by $\classFcc$.  Catacondensed
fusenes can be further divided into \emph{branched} and \emph{non-branched}
fusenes.  A catacondensed fusene is called non-branched if its inner dual
is a path; otherwise it is called branched. The class of non-branched
fusenes will be denoted by $\classFnb$.  Those definitions are naturally
inherited by benzenoids. The class of catacondensed benzenoids and
non-branched benzenoids will be denoted, respectively, by $\classBcc$ and
$\classBnb$.  In this paper, we restrict to catacondensed benzenoids when
using the terms branched and non-branched.

\subsection{Boundary-edges code revisited}

Each fusene can be assigned a \emph{boundary-edges code} (BEC), a sequence of
numbers counting the number of boundary edges between two vertices of
degree $3$, following the perimeter in an arbitrary, say counter-clockwise,
direction. This useful tool to describe a benzenoid was introduced by
P.~Hansen and his co-workers \cite{hansen1996}. The code depends on the
starting vertex and the chosen direction. However, it can be made unique by
choosing the lexicographically maximal code among all possible codes which
is often called the \emph{canonical code}. Each benzenoid can be
\emph{uniquely} described by such boundary-edges code, but this does not
hold for fusenes \cite{guo2002}. Benzene is an exceptional benzenoid as it
is the only benzenoid with no vertex of degree $3$. If need be, we assign
the code $6$ to benzene. In the present paper the (lexicographically
maximal) boundary-edges code of $B$ will be denoted by $\code(B)$.

Here, we take a different approach and start from the definition of a code:
\begin{definition}
A \emph{code} is a string over the alphabet $\{1,2,3,4,5\}$. 
\end{definition}
Note that we permit codes that are not boundary-edges codes of any
benzenoid (or fusene).

By $c \oplus d$ we denote concatenation of codes $c$ and $d$, e.g.\ 
\begin{equation*}
422 \oplus 5133 = 4225133.
\end{equation*}
Moreover, $\sigma_i(c)$ for
$i \geq 0$ denotes the right circular shift of code $c$ by $i$ positions, e.g.\
\begin{equation*}
\sigma_3(4225133) = 1334225.
\end{equation*}
This operation can also be defined for the negative values of $i$, such
that $\sigma_{-i}(c)$ for $i \geq 0$ is the left circular shift of $c$ by
$i$ positions, e.g.\
\begin{equation*}
\sigma_{-2}(1334225) = 3422513.
\end{equation*}
By $\rho(c)$ we denote the reverse of $c$, e.g.\ 
\begin{equation*}
\rho(3422513) = 3152243.
\end{equation*}
Note that $\rho^2(c) = c$ and $\sigma_i \sigma_{-i}(c) = c$ for every code
$c$.

We will use some properties of codes.
\begin{definition}
  Let $c$ be a code. By $\len(c)$ we denote the length of the code, i.e.\
  the number of symbols appearing in it and by $\suma(c)$ we denote the sum
  of all numbers of the code.  By $\win(c)$ we denote the \emph{winding} of
  the code, which is defined as
\begin{equation*}
  \win(c) = \suma(c) - 2 \len(c).
\end{equation*}
\end{definition}

\begin{lemma}
The winding of a concatenation of two 
codes $c$  and $d$ is additive, i.e. $$\win(c \oplus d) = \win(c) + \win(d).$$ 
\end{lemma}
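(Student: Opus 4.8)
The plan is to reduce the claim to the additivity of its two constituent quantities, $\suma$ and $\len$, under concatenation. Since $\win$ is defined as the linear combination $\win(c) = \suma(c) - 2\len(c)$, once I establish that both $\len$ and $\suma$ distribute over $\oplus$, additivity of $\win$ follows by a one-line computation.

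First I would observe that the concatenation $c \oplus d$ simply juxtaposes the symbols of $c$ followed by those of $d$, without inserting, deleting, or altering any symbol. Hence the number of symbols in $c \oplus d$ is the number in $c$ plus the number in $d$, giving $\len(c \oplus d) = \len(c) + \len(d)$. Likewise, the multiset of numerical values appearing in $c \oplus d$ is the disjoint union of those appearing in $c$ and in $d$, so their total is $\suma(c \oplus d) = \suma(c) + \suma(d)$.

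Then I would substitute these two identities directly into the definition of winding:
\begin{align*}
\win(c \oplus d) &= \suma(c \oplus d) - 2\len(c \oplus d) \\
&= \bigl(\suma(c) + \suma(d)\bigr) - 2\bigl(\len(c) + \len(d)\bigr) \\
&= \bigl(\suma(c) - 2\len(c)\bigr) + \bigl(\suma(d) - 2\len(d)\bigr) \\
&= \win(c) + \win(d),
\end{align*}
which is exactly the claimed identity.

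I do not anticipate any genuine obstacle here: the result is an immediate consequence of the fact that both defining quantities are additive under concatenation and that $\win$ depends on them linearly. The only point requiring any care is to state precisely that concatenation preserves both the count and the sum of symbols, and this is transparent from the definition of $\oplus$ as juxtaposition.
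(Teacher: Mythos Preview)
Your proof is correct and follows essentially the same approach as the paper: both note that $\len$ and $\suma$ are additive under concatenation and then substitute into the definition of $\win$ to obtain the result by a direct computation.
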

\begin{proof}
  The length of the code and the sum of the numbers of the code are clearly
  additive. Then
  \begin{align*}
    \win(c \oplus d)  & = \suma(c \oplus  d) - 2 \len(c \oplus  d) \\
                      & = \suma(c) + \suma(d) - 2 \len(c) - 2 \len(d) \\
                      & = \win(c) + \win(d). \qedhere
  \end{align*}
\end{proof}

\begin{definition}
  Two codes $c$ and $d$ are \emph{equivalent} if one can be obtained from
  the other by a circular shift and possibly reversal, i.e.\ if there
  exists an integer $k$ such that $\sigma_k(c) = d$ or
  $\rho \sigma_k(c) = d$.
\end{definition}

\begin{definition}
  A code is \emph{canonical} if it is lexicographically maximal among all
  equivalent codes.
\end{definition}

No simple way is known to check whether a given code is the boundary-edges
code of some benzenoid.  However, there is an obvious necessary condition.
\begin{lemma}
Let $B$ be a benzenoid with at least $2$ hexagons. Then $\win(\code(B)) = 6$.
\end{lemma}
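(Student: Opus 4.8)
The plan is to reduce the entire statement to a single counting identity for the boundary cycle. Write $c = \code(B)$, and recall that, since $B$ is $2$-connected, its outer boundary is a single simple cycle. Let $n_3$ be the number of degree-$3$ vertices on this boundary and $n_2$ the number of degree-$2$ vertices on it (all degree-$2$ vertices of a benzenoid are boundary vertices, since interior vertices have degree $3$ by definition). Because $B$ has at least two hexagons, the boundary carries at least one degree-$3$ vertex, so the code genuinely records one entry for each arc of boundary edges running between two consecutive degree-$3$ vertices. Consequently $\len(c) = n_3$, while $\suma(c)$ counts each boundary edge exactly once; as the boundary is a cycle, its number of edges equals its number of vertices, i.e.\ $\suma(c) = n_2 + n_3$. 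Substituting into the definition of winding gives
\begin{equation*}
  \win(c) = \suma(c) - 2\len(c) = (n_2 + n_3) - 2 n_3 = n_2 - n_3,
\end{equation*}
so the lemma is equivalent to proving $n_2 - n_3 = 6$.

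To establish $n_2 - n_3 = 6$ I would exploit that $B$, being a benzenoid, embeds in the hexagonal lattice, so its boundary is an honest non-self-crossing polygon, which I traverse counter-clockwise. At every vertex the incident edges meet at multiples of $60^\circ$: at a degree-$2$ boundary vertex the two boundary edges enclose an interior angle of $120^\circ$, contributing a turning (exterior) angle of $+60^\circ$; at a degree-$3$ boundary vertex the inward third edge forces an interior angle of $240^\circ$, contributing $-60^\circ$. Since the total turning around a simple closed curve is exactly $360^\circ$, this yields
\begin{equation*}
  60^\circ \cdot n_2 - 60^\circ \cdot n_3 = 360^\circ,
\end{equation*}
that is, $n_2 - n_3 = 6$. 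Combined with the first paragraph, $\win(c) = 6$.

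The step needing the most care is the geometric input, not the arithmetic: one must argue that embeddability in the hexagonal lattice really does make the boundary non-self-crossing with every interior angle equal to $120^\circ$ or $240^\circ$. This is exactly where the hypothesis \emph{benzenoid} is used, as opposed to a general fusene that can spiral onto itself (as $[6]$helicene does). I would also flag a purely combinatorial alternative that sidesteps angles entirely: feed Euler's formula $V - E + (h+1) = 2$ together with the boundary-cycle relation $E_b = n_2 + n_3$ and the edge count $6h = 2E - E_b$ (each hexagon contributes six edge-incidences, interior edges being counted twice and boundary edges once), and a short elimination between these relations and the degree sum $2E = 2n_2 + 3n_3 + 3V_i$ again produces $n_2 - n_3 = 6$. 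Finally I would note that the excluded case $h = 1$ (benzene) is genuinely exceptional: there $n_3 = 0$, the code has the single symbol $6$ rather than $\len(c) = n_3$ entries, and $\win$ comes out as $4$, which is precisely why the hypothesis $h \ge 2$ is imposed.
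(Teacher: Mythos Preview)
Your proof is correct and takes a genuinely different route from the paper. The paper proceeds by induction on the number $h$ of hexagons: the base case is naphthalene ($\code=55$, $\win=6$), and the inductive step invokes the structural fact that every benzenoid on $h$ hexagons arises from one on $h-1$ hexagons by a $k$-contact addition of a single hexagon, then checks that each such addition leaves $\win$ unchanged. You instead derive the closed identity $\win(c)=n_2-n_3$ from the definition of the code and then prove $n_2-n_3=6$ directly, either by the turning-angle argument on the lattice embedding or by an Euler-formula count.

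Your approach is more self-contained (no appeal to the build-up theorem from \cite{gutman1989}) and more explanatory: the $6$ is visibly the $360^\circ$ of a full turn divided by the $60^\circ$ granularity of the lattice. Your Euler-formula alternative has the further advantage that it goes through verbatim for arbitrary fusenes, not just benzenoids. The paper's induction, on the other hand, is shorter to write and reuses machinery already available in the benzenoid literature; it also extends to fusenes since the $k$-contact build-up works there too, whereas your angle argument needs the lattice embedding and hence the benzenoid hypothesis, as you correctly flag.
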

\begin{proof}
  The proof proceeds by induction on the number of hexagons, $h$.

  The only benzenoid with $h = 2$ hexagons is naphthalene, with code
  $55$. Clearly, $\win(55) = 10 - 2 \cdot 2 = 6$.

  It is known that any benzenoid $B$ with $h$ hexagons can be obtained from
  some benzenoid $B'$ with $h - 1$ hexagons by adding a new hexagon using
  either one-, two-, three-, four- or five-contact addition, where
  $k$-contact implies that $k$ edges of the new hexagon are identified with
  $k$ consecutive edges of $B'$ (see \cite[pp.\ 12--13]{gutman1989}).  Let
  $\code(B) = c$ and $\code(B') = c'$. Assume that $\win(c') = 6$.

  If $B$ is obtained by one-contact addition then $c$ can be obtained from
  $c'$ by replacing the symbol $s$ (the one that corresponds to the part of
  the boundary where the new hexagon was attached) with $s_1 5 s_2$ where
  $s_1 + s_2 = s - 1$.  Then
  \begin{equation*}
    \win(c) = (\suma(c') + 5 - 1) - 2 (\len(c') + 2) = \win(c') = 6.
  \end{equation*}
  Analogous arguments can be used for other types of addition.
\end{proof}

\section{Convex benzenoids and convexity deficit}

\subsection{Graph invariants}
A \emph{graph invariant} is a function from a class of graphs to a class of
values (e.g.\ integers, real numbers, polynomials) that takes the same
value for any two isomorphic graphs. Graph invariants may be categorised by
codomains of the functions that define them. When the codomain is the
Boolean domain, they are called \emph{graph properties}. (For example, a
graph can either be bipartite or non-bipartite.) Numerous \emph{integer
  invariants} exist for graphs: order, size, diameter, girth, genus,
chromatic number, etc. Perhaps the most well-known integer invariant in
chemical graph theory is the Wiener index \cite{Rouvray2002}. An example of
a \emph{real number invariant} is the Estrada index
\cite{PenaGutmanRada2007}. In the literature one can find thousands of
graph invariants.

\subsection{Convex benzenoids}

In 2012, a special sub-family of benzenoids, called \emph{convex
  benzenoids}, was introduced by Cruz, Gutman and
Rada~\cite{CruGutRad12}. This family was further studied and enumerated in
\cite{BasFowPis2017}.  A convex benzenoid can be characterised via its
boundary-edges code.

\begin{definition}
Benzenoid $B$ is \emph{convex} if its boundary-edges code contains no $1$.
\end{definition}
The above statement is Proposition~3 in \cite{BasFowPis2017}. Since this is
one possible characterisation of convex benzenoids we may use it as a
definition here.

We note in passing that for infinite benzenoids the situation is more
complex. As shown in \cite{Bas2017} infinite benzenoids may have more than
one boundary component and may need several infinite codes for its
description. Sometimes the code does not describe an infinite benzenoid up
to isomorphism. An example of such an infinite convex benzenoid that is not
determined by its boundary-edges code is called a \emph{strip} in
\cite{Bas2017}.  Strips of different width have the same boundary-edges
code. Hence, in this paper we focus mainly 
on finite benzenoids.

\subsection{Convexity deficit}

Both convex and non-convex benzenoids play important and sometimes distinct
roles in organic chemistry.  For example, in the simplest case of benzenoid
isomers, convex anthracene comprising three linearly fused hexagons is less
stable than non-convex phenanthrene (see Figure~\ref{fig:convsnon}).
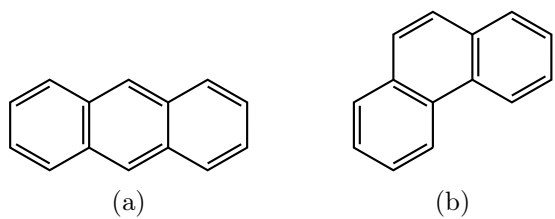
\begin{figure}[!htbp]
\centering
\subfigure[]{
\begin{tikzpicture}[scale=0.6]
\tikzstyle{edge} = [draw,thick]
\coordinate (a1) at (0, 0) {};
\coordinate (a2) at ($ (a1) + (90:1) $) {};
\coordinate (a3) at ($ (a2) + (150:1) $) {};
\coordinate (a4) at ($ (a3) + (210:1) $) {};
\coordinate (a5) at ($ (a4) + (-90:1) $) {};
\coordinate (a6) at ($ (a5) + (-30:1) $) {};
\coordinate (b1) at ($ (a1) + (-30:1) $) {};
\coordinate (b2) at ($ (b1) + (30:1) $) {};
\coordinate (b3) at ($ (b2) + (90:1) $) {};
\coordinate (b4) at ($ (b3) + (150:1) $) {};
\coordinate (c1) at ($ (b2) + (-30:1) $) {};
\coordinate (c2) at ($ (c1) + (30:1) $) {};
\coordinate (c3) at ($ (c2) + (90:1) $) {};
\coordinate (c4) at ($ (c3) + (150:1) $) {};
\path[edge] (a1) -- (a2) -- (a3) -- (a4) -- (a5) -- (a6) -- (a1);
\path[edge] (a1) -- (b1) -- (b2) -- (b3) -- (b4) -- (a2);
\path[edge] (b2) -- (c1) -- (c2) -- (c3) -- (c4) -- (b3);
\path[edge] ($ (a3) + (-90:0.15) $) -- ($ (a4) + (-30:0.15) $);
\path[edge] ($ (a5) + (30:0.15) $) -- ($ (a6) + (90:0.15) $);
\path[edge] ($ (a1) + (150:0.15) $) -- ($ (a2) + (210:0.15) $);
\path[edge] ($ (b1) + (90:0.15) $) -- ($ (b2) + (150:0.15) $);
\path[edge] ($ (b3) + (210:0.15) $) -- ($ (b4) + (-90:0.15) $);
\path[edge] ($ (c1) + (90:0.15) $) -- ($ (c2) + (150:0.15) $);
\path[edge] ($ (c3) + (210:0.15) $) -- ($ (c4) + (-90:0.15) $);
\end{tikzpicture}
}
\qquad
\subfigure[]{
\begin{tikzpicture}[scale=0.6]
\tikzstyle{edge} = [draw,thick]
\coordinate (a1) at (0, 0) {};
\coordinate (a2) at ($ (a1) + (90:1) $) {};
\coordinate (a3) at ($ (a2) + (150:1) $) {};
\coordinate (a4) at ($ (a3) + (210:1) $) {};
\coordinate (a5) at ($ (a4) + (-90:1) $) {};
\coordinate (a6) at ($ (a5) + (-30:1) $) {};
\coordinate (b1) at ($ (a2) + (30:1) $) {};
\coordinate (b2) at ($ (b1) + (90:1) $) {};
\coordinate (b3) at ($ (b2) + (150:1) $) {};
\coordinate (b4) at ($ (b3) + (210:1) $) {};
\coordinate (c1) at ($ (b1) + (-30:1) $) {};
\coordinate (c2) at ($ (c1) + (30:1) $) {};
\coordinate (c3) at ($ (c2) + (90:1) $) {};
\coordinate (c4) at ($ (c3) + (150:1) $) {};
\path[edge] (a1) -- (a2) -- (a3) -- (a4) -- (a5) -- (a6) -- (a1);
\path[edge] (a2) -- (b1) -- (b2) -- (b3) -- (b4) -- (a3);
\path[edge] (b1) -- (c1) -- (c2) -- (c3) -- (c4) -- (b2);
\path[edge] ($ (a4) + (-30:0.15) $) -- ($ (a5) + (30:0.15) $);
\path[edge] ($ (a6) + (90:0.15) $) -- ($ (a1) + (150:0.15) $);
\path[edge] ($ (a2) + (210:0.15) $) -- ($ (a3) + (-90:0.15) $);
\path[edge] ($ (b3) + (-90:0.15) $) -- ($ (b4) + (-30:0.15) $);
\path[edge] ($ (b1) + (30:0.15) $) -- ($ (b2) + (-30:0.15) $);
\path[edge] ($ (c1) + (90:0.15) $) -- ($ (c2) + (150:0.15) $);
\path[edge] ($ (c3) + (210:0.15) $) -- ($ (c4) + (-90:0.15) $);
\end{tikzpicture}
}
\caption{The two $3$-hexagon Kekulean benzenoids: (i) anthracene, (ii) phenanthrene.}
\label{fig:convsnon}
\end{figure}
In qualitative theories, this is variously attributed to the larger number
of Kekul{\'e} structures in phenanthrene (5 vs.\ 4), its higher Fries number
(3 vs.\ 2) or its higher Clar number (2 vs.\ 1), all of which are inextricably
linked to its angular, non-convex shape.  We think it will be useful to
introduce a measure that will tell us by how much the shape of benzenoid
departs from convexity. We call this measure the \emph{convexity deficit}.

\begin{definition}
  A benzenoid $B$ with boundary-edges code $c$ is \emph{$k$-convex},
  $k \geq 0$, if the average of $k+1$ consecutive values in $c$ is always
  at least $2$. The minimum such value of $k$ is called the \emph{convexity
    deficit} of $B$ and is denoted by $\cd(B)$.
\end{definition}
For an infinite benzenoid $B$, we may have $\cd(B) = \infty$.

We may write down a formal definition:
\begin{align*}
  \avg(c, k)      & = \min \left\{ \frac{\suma(d)}{\len(d)} \Bigm|
                    d \subseteq c \text{ and } \len(d) = k \right\} \\
  \cd(B) = \cd(c) &  = \min \left\{ k \mid k \geq 0 \text{ and }
                    \avg(c,k + 1) \geq 2 \right\}
\end{align*}
Note that in the first formula $d \subseteq c$ denotes any subcode
consisting of cyclically consecutive symbols of code $c$.  Note that
convexity deficit generalises the notion of convexity for
benzenoids. Clearly, $\cd(B) = 0$ is equivalent to saying that there is no
$1$ in the code.
\begin{proposition}
A benzenoid $B$ is convex if and only if $\cd(B) = 0$.  \qed
\end{proposition}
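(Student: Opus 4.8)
The plan is simply to unwind both definitions and check that each side reduces to the same condition on the code $c = \code(B)$. First I would observe that, because $k$ ranges over $k \geq 0$ in the definition of the convexity deficit, the minimal value $\cd(B) = 0$ is attained exactly when $k = 0$ already meets the defining requirement, that is, when $\avg(c, 0+1) = \avg(c,1) \geq 2$. Thus $\cd(B) = 0$ is equivalent to $\avg(c,1) \geq 2$.

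Next I would evaluate $\avg(c,1)$ explicitly. By the formal definition, $\avg(c,1)$ is the minimum of $\suma(d)/\len(d)$ taken over all cyclically consecutive subcodes $d \subseteq c$ with $\len(d) = 1$. Such a subcode is a single symbol $s$, for which $\suma(d)/\len(d) = s$. Hence $\avg(c,1)$ is precisely the smallest symbol occurring in $c$. Since every symbol of a code is a positive integer from the alphabet $\{1,2,3,4,5\}$ (with the sole exception of benzene, whose code is $6$), the only value strictly below $2$ that can appear is $1$; therefore $\avg(c,1) \geq 2$ holds if and only if $c$ contains no occurrence of the symbol $1$.

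Finally I would invoke the definition of convexity: $B$ is convex precisely when its boundary-edges code contains no $1$. Chaining the three equivalences established above yields $B$ convex $\iff$ $c$ contains no $1$ $\iff$ $\avg(c,1) \geq 2$ $\iff$ $\cd(B) = 0$, which is the assertion. The statement is an immediate consequence of the definitions, so there is no genuine obstacle here; the only step meriting a moment's care is the correct evaluation of $\avg(c,1)$ as the minimum single-symbol value, since it is exactly this that makes the length-$1$ average coincide with the \emph{no $1$} condition.
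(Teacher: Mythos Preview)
Your proof is correct and follows exactly the route the paper intends: the proposition is stated with a \qed and no proof, because the paper regards it as immediate from the definitions (``Clearly, $\cd(B) = 0$ is equivalent to saying that there is no $1$ in the code''). You have simply written out that clear step in full detail.
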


\begin{proposition}
\label{thm:kbounded}
If $B$ is a finite benzenoid then
\begin{equation*}
0 \leq \cd(B) \leq \len(\code(B)) - 1.
\end{equation*}
\end{proposition}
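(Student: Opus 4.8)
The lower bound is immediate, since $\cd(B)$ is defined as a minimum over non-negative integers $k$. The plan for the upper bound is to exhibit one admissible value of $k$: I would show that the defining condition $\avg(c,k+1) \geq 2$ already holds when $k = \len(c) - 1$. Because $\cd(B)$ is the \emph{smallest} such $k$, this immediately yields $\cd(B) \leq \len(c) - 1$. Throughout, write $c = \code(B)$ and $n = \len(c)$.

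The key observation is that the only cyclically consecutive subcode of $c$ of length $n$ is $c$ itself: every rotation of $c$ uses all $n$ symbols and has the same sum. Hence the minimum in the definition of $\avg(c,n)$ is taken over a single quantity, and I would simply compute $\avg(c,n) = \suma(c)/n = 2 + \win(c)/n$, using the identity $\win(c) = \suma(c) - 2\len(c) = \suma(c) - 2n$ from the definition of winding.

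It then remains to verify that $\win(c) \geq 0$, which I would read off from the winding lemma proved above: for any benzenoid with at least two hexagons one has $\win(\code(B)) = 6 > 0$, and the single remaining case, benzene with assigned code $6$, gives $\win(6) = 6 - 2 = 4 > 0$. In every case $\avg(c,n) = 2 + \win(c)/n \geq 2$, so $k = n-1$ lies in the set over which $\cd(B)$ is minimized, which establishes the bound. The only care needed is the bookkeeping at the extremes: recognizing that a full-length cyclic window recovers exactly the global average $\suma(c)/n$, and separating out the exceptional code $6$ of benzene, for which the winding lemma (stated only for $h \geq 2$) must be checked by hand rather than applied directly.
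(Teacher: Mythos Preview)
Your proof is correct and follows essentially the same route as the paper: both compute the full-length average $\suma(c)/\len(c) = 2 + \win(c)/\len(c)$ and invoke the winding identity to conclude this exceeds $2$, so that $k = \len(c)-1$ is admissible. You are in fact slightly more careful than the paper, which silently writes $\win(c)=6$ without separating out the benzene case; your explicit check that $\win(6)=4>0$ handles the one benzenoid not covered by the winding lemma.
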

\begin{proof}
  Let $c = \code(B)$. Recall the definition of winding. Since
  $\win(c) = \suma(c) - 2\len(c) = 6$, we have
  \begin{equation*}
    \frac{\suma(c)}{\len(c)} = 2 + \frac{6}{\len(c)} > 2.
  \end{equation*}
  Hence any (finite) benzenoid is $k$-convex at least for
  $k = \len(c)-1$.
\end{proof}
We note that for infinite benzenoids there exists no upper bound on the
convexity deficit.

\begin{proposition}
  There exist infinite benzenoids $B$ that are not $k$-convex for any
  finite value of $k \geq 0$.
\end{proposition}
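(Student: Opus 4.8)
The plan is to exhibit a single infinite benzenoid $B$ whose (infinite) boundary-edges code $c$ contains one symbol $1$ that is immediately flanked by an infinite run of $2$'s, and then to finish with a one-line averaging estimate. That estimate is the whole point: if $c$ contains a $1$ followed by infinitely many $2$'s, then for every $m \geq 1$ the subcode $d$ consisting of that $1$ together with the next $m-1$ twos satisfies $\len(d)=m$ and $\suma(d)=1+2(m-1)=2m-1$, so $\suma(d)/\len(d)=(2m-1)/m<2$. Hence $\avg(c,m)<2$ for every $m\geq1$, and taking $m=k+1$ shows that $B$ fails to be $k$-convex for every finite $k\geq 0$; that is, $\cd(B)=\infty$.

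For the construction I would take an infinite \emph{bent polyacene} (a chevron): two half-infinite straight, linearly fused polyacene arms meeting at a single angular, phenanthrene-type junction. This is a subgraph of the infinite hexagonal lattice that is $2$-connected, has all bounded faces hexagonal, and has all non-peripheral vertices of degree $3$, so it is a genuine (infinite) benzenoid. The role of the angular junction is to create exactly one concave inlet (a bay), while each arm is straight and hence bounded on its inner side by an ordinary zigzag edge running off to infinity.

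The only substantive thing to check — and the step I expect to be the main obstacle — is that this geometry really produces the local code pattern $\dots,2,1,2,\dots$, i.e.\ that a bay contributes a single $1$ and that the adjacent straight zigzag edge contributes a run of $2$'s. I would establish this from the finite case and then pass to the limit. A direct trace of the perimeter gives code $55$ for naphthalene and $5252$ for anthracene, so straight edges are coded by $2$'s, whereas the angular isomer phenanthrene has code $5153$, so the bay is coded by the isolated $1$. In phenanthrene the arms have length $1$, so the $1$ sits between the two end-cap $5$'s; but lengthening each arm to a value $t\geq 2$ inserts straight zigzag segments between the bay and the caps, and a short local analysis (one $\mathrm{d}3$ vertex, then $\mathrm{d}2$, then $\mathrm{d}3$ gives a $2$) shows the symbol immediately adjacent to the bay on each inner edge is a $2$. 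Thus a finite bent polyacene with arms of length $t$ has code of the form $\dots 2^{a}\,1\,2^{b}\dots$ with the $1$ immediately flanked by $2$'s and $a,b\to\infty$ as $t\to\infty$. In the infinite chevron the bay-$1$ is therefore flanked by an infinite block of $2$'s, which is exactly the hypothesis needed above.

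As a remark I would note that the same conclusion follows from any infinite benzenoid whose code contains arbitrarily long runs of $1$'s — for instance an infinite inward spiral whose successive coils create ever-deeper fjords — since there each all-$1$ window of length $m$ already has average $1<2$. I nonetheless prefer the chevron, because the single-bay analysis is elementary and reduces the entire proof to the finite code computations above together with the displayed inequality $(2m-1)/m<2$.
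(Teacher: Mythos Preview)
Your proof is correct and follows essentially the same approach as the paper: exhibit an infinite benzenoid whose boundary-edges code contains the pattern $\ldots 2221222\ldots$, so that every window of length $k+1$ through the $1$ has average $(2k+1)/(k+1)<2$. The paper uses a different concrete example (the complement of the anvil $\mathcal{AN}$, a pericondensed half-plane-like region) rather than your catacondensed infinite chevron, but the relevant local code pattern and the averaging argument are identical.
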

\begin{proof}
  An example is the infinite benzenoid with boundary-edges code
  $\ldots 2221222 \ldots$ shown in Figure~\ref{fig:cdunbounded}.  It is the
  complement of the anvil $\mathcal{AN}$ \cite{Bas2017}.
\end{proof}

\begin{figure}[!htbp]
\centering
\input{figures/limit.tikz}
\caption{The infinite benzenoid with boundary-edges code $\ldots 2221222 \ldots$.}
\label{fig:cdunbounded}
\end{figure}

\subsection{Quasi-convex and pseudo-convex benzenoids}

Now we turn our attention to the non-convex benzenoids that are closest to
convex, i.e.\ the benzenoids with the next smallest convexity deficit.
\begin{definition}
  A $1$-convex benzenoid which is not $0$-convex (i.e., $\cd(B) = 1$) is
  called \emph{quasi-convex}.
\end{definition}

Note that quasi-convex benzenoids admit a simple characterisation via the
boundary-edges code.
\begin{proposition}
  A benzenoid is quasi-convex if and only if its boundary-edges code
  contains at least one $1$ but no sub-sequence $11$, $12$, or $21$.
\end{proposition}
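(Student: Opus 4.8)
The plan is to unwind the definition of quasi-convexity directly and reduce the single condition $\cd(B) = 1$ to two elementary statements about cyclically consecutive symbols of the code. By definition $B$ is quasi-convex precisely when it is $1$-convex but not $0$-convex, and I would handle these two requirements separately and then recombine them.

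First I would dispose of the ``not $0$-convex'' part. As already observed immediately after the definition of convexity deficit, $B$ is $0$-convex if and only if $\avg(c,1) \geq 2$, i.e.\ every single symbol is at least $2$, which happens exactly when the code contains no $1$. Hence $B$ fails to be $0$-convex if and only if $\code(B)$ contains at least one $1$. This already yields the first half of the claimed characterisation.

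Next I would translate ``$1$-convex'' into a pairwise condition. By definition $B$ is $1$-convex if and only if $\avg(c,2) \geq 2$, that is, the sum of every two cyclically consecutive symbols of $c$ is at least $4$. The key---and essentially only---step is then a finite inspection of the alphabet $\{1,2,3,4,5\}$: the ordered pairs $(a,b)$ with $a+b \leq 3$ are exactly $(1,1)$, $(1,2)$, and $(2,1)$. Consequently the requirement ``every consecutive pair sums to at least $4$'' is equivalent to ``$c$ contains none of the sub-sequences $11$, $12$, $21$''. Here ``consecutive'' and ``sub-sequence'' are understood cyclically, matching the definition of $\avg(c,k)$; forbidding both $12$ and $21$ makes the statement independent of the direction in which the perimeter is traversed, so it is invariant under the equivalence relation on codes (shift and reversal fix $11$ and interchange $12 \leftrightarrow 21$).

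Combining the two equivalences gives the proposition: $B$ is quasi-convex if and only if $\code(B)$ contains at least one $1$ (from failing $0$-convexity) and no sub-sequence $11$, $12$, or $21$ (from $1$-convexity). There is no genuine obstacle here beyond this bookkeeping; the one point worth flagging is that all three forbidden patterns are truly needed, since a code of the form $\ldots 4\,1\,1\,4 \ldots$ contains a $1$ and neither $12$ nor $21$, yet the pair $11$ sums to $2 < 4$ and so violates $1$-convexity. Thus the characterisation cannot be simplified by dropping any of the three patterns.
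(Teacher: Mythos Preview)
Your proof is correct and follows essentially the same approach as the paper: unpack quasi-convexity as ``not $0$-convex'' (forces a $1$ in the code) together with ``$1$-convex'' (every cyclically consecutive pair sums to at least $4$), then observe that over the alphabet $\{1,2,3,4,5\}$ the only bad pairs are $11$, $12$, $21$. The paper's version is terser but identical in substance; your added remarks on cyclic invariance and on why none of the three forbidden patterns can be dropped are sound but not required.
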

\begin{proof}
  A quasi-convex benzenoid is not convex, hence its code contains a
  $1$. Let $a$ and $b$ be two cyclically consecutive numbers in the code of
  this benzenoid. Since it is $1$-convex, $a + b \geq 4$, hence $11$, $12$,
  and $21$ are forbidden. The converse also follows.
\end{proof}

Convex benzenoids can be classified into families with a common fundamental
shape, 
where zig-zag ($2^k$) sub-sequences define the edges of the shape.
Similarly, all quasi-convex benzenoids have a fundamental shape where the
edges are defined by either zig-zag or armchair ($1(31)^k$) sub-sequences.
The fundamental shape of a convex benzenoid has at most $6$ edges; for a
quasi-convex benzenoid it has at most $12$.  Zig-zag and airmchair
termination have consequences for stability of benzenoids \cite{Dias2005}
and conductivity of nanotubes \cite{Dresselhaus}.  A quasi-convex benzenoid
that has no zig-zag sub-sequences in its boundary-edges code will be called
\emph{pseudo-convex}.
\begin{definition}
  A benzenoid whose boundary-edges code contains at least one $1$ but no
  sub-sequence $11$ and $2$ is called \emph{pseudo-convex}.
\end{definition}

\begin{proposition}
  Every pseudo-convex benzenoid is quasi-convex but the converse is not
  true. \qed
\end{proposition}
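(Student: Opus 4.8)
The plan is to treat the proposition's two assertions separately, since they are of very different character. For the implication ``pseudo-convex $\Rightarrow$ quasi-convex'' I would argue directly from the two code characterisations already at hand. A pseudo-convex benzenoid has a code $c$ that contains at least one $1$, contains no factor $11$, and contains no symbol $2$ at all. The preceding proposition characterises quasi-convex benzenoids as exactly those whose code contains a $1$ but none of the cyclic factors $11$, $12$, $21$. Now the presence of a $1$ and the absence of $11$ are immediate from the hypothesis, and since $c$ has no occurrence of the symbol $2$ it can contain neither $12$ nor $21$. Hence every pseudo-convex code meets the quasi-convex criterion, and this direction is a one-line verification.

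The real content is the claim that the converse fails, i.e.\ that some quasi-convex benzenoid is not pseudo-convex. By the characterisations, such a benzenoid is precisely a quasi-convex one whose code carries at least one symbol $2$ (a zig-zag sub-sequence): a quasi-convex code that contains a $2$ simply cannot satisfy the ``no $2$'' clause of pseudo-convexity. So the whole burden reduces to exhibiting one concrete benzenoid whose code (i) contains a $1$, (ii) contains a $2$, and (iii) avoids all of $11$, $12$, $21$.

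The design principle I would follow is to keep every $1$ (cove) insulated from every $2$ (zig-zag) by corner symbols that are at least $3$, so that no forbidden adjacency can appear; concretely one wants blocks such as $\ldots 3\,1\,3\ldots$ and $\ldots 3\,2\,3\ldots$ sewn together by further $3$'s. The hard part is \emph{not} the combinatorics of the code but \emph{realisability}: as the text already warns, $\win(c)=6$ is necessary but far from sufficient for a string to be the code of an actual benzenoid, because the induced boundary walk must also close up and embed in the hexagonal lattice, and a plausible-looking code can fail closure. The trick I would use to sidestep this is to impose symmetry on the witness. If the code has the form $w \oplus w$ with $\win(w)=3$ (for example $w = 313323$, so $\code(B) = 313323313323$, with $\suma(w)=15$, $\len(w)=6$), then the boundary walk turns through $180^\circ$ per block, the second block is the point reflection of the first, and the walk is therefore forced to close. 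One then only has to confirm that the resulting lattice polygon is simple and bipartite-consistent, which is a finite check done by listing its vertices. The benzenoid bounded by this curve has two coves and two zig-zag edges, is quasi-convex by construction, and is not pseudo-convex since its code contains $2$; a figure displaying it finishes the proof. Thus the main obstacle is producing a genuinely realisable example rather than the logic, and the symmetry construction is what turns verification from ad hoc into routine.
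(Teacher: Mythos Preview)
Your argument for the implication is correct and is exactly what the paper treats as self-evident (hence the bare \qed). For the converse, however, you work much harder than the paper does. The paper constructs nothing: it simply points to small, well-known molecules already tabulated and drawn in the surrounding text, namely the ``pistol'' polyhex with code $52441$ and benzo[a]pyrene with code $513432$. Each code visibly contains a $1$ and a $2$, and a glance at the cyclic adjacencies shows that none of $11$, $12$, $21$ occurs, so both are quasi-convex but not pseudo-convex; realisability is a non-issue because these are named compounds with CAS numbers. Your centrosymmetric construction $w \oplus w$ with $\win(w)=3$ is sound and pleasantly self-contained---the point-symmetry forces closure, leaving only simplicity to be checked by hand---but it is overkill here, where a four- or five-hexagon example already on the page does the job.
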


Here are some small examples. Note that naphthalene $55$ is convex,
phenanthrene $5351$ is pseudo-convex and benzo[a]pyrene $513432$ is
quasi-convex (but not pseudo-convex). A smaller example of such a benzenoid
is described by $52441$. They are shown in Figures~\ref{fig:theseven} and \ref{fig:thefoursmall}.
The BEC $52441$ applies to the ``pistol'' polyhex \cite{Gardner1978}, named
for its shape.  BEC apply equally to benzenoids and polyhexes.

\begin{figure}[!htbp]
\centering
\subfigure[pistol, $52441$]{
\begin{tikzpicture}[scale=1,xscale=1]
\tikzstyle{every node} = [inner sep=1.2, draw, circle,fill=gray!60]
\tikzstyle{edge} = [draw, line width=1.0]
\tikzstyle{periedge} = [draw, line width=1.0]
\node (h11) at (0.43,0.75) {};
\node (h12) at (0.87,1) {};
\node (h13) at (1.3,0.75) {};
\node (h14) at (1.3,0.25) {};
\node (h15) at (0.87,0) {};
\node (h16) at (0.43,0.25) {};
\node (h21) at (0,1) {};
\node (h22) at (0,1.5) {};
\node (h23) at (0.43,1.75) {};
\node (h24) at (0.87,1.5) {};
\node (h31) at (1.3,1.75) {};
\node (h32) at (1.73,1.5) {};
\node (h33) at (1.73,1) {};
\node (h41) at (2.17,1.75) {};
\node (h42) at (2.6,1.5) {};
\node (h43) at (2.6,1) {};
\node (h44) at (2.17,0.75) {};
\draw[edge] (h11)--(h12)--(h13)--(h14)--(h15)--(h16)--(h11);
\draw[edge] (h11)--(h21)--(h22)--(h23)--(h24)--(h12);
\draw[edge] (h24)--(h31)--(h32)--(h33)--(h13);
\draw[edge] (h32)--(h41)--(h42)--(h43)--(h44)--(h33);
\end{tikzpicture}
}
\subfigure[wave, $513513$]{
\begin{tikzpicture}[scale=1,xscale=1,rotate=30]
\tikzstyle{every node} = [inner sep=1.2, draw, circle,fill=gray!60]
\tikzstyle{edge} = [draw, line width=1.0]
\tikzstyle{periedge} = [draw, line width=1.0]
\node (11) at (0.5,1) {};
\node (12) at (1,1) {};
\node (13) at (1.25,1.43) {};
\node (14) at (1,1.87) {};
\node (15) at (0.5,1.87) {};
\node (16) at (0.25,1.43) {};
\node (21) at (1.25,0.57) {};
\node (22) at (1.75,0.57) {};
\node (23) at (2,1) {};
\node (24) at (1.75,1.43) {};
\node (31) at (2.5,1) {};
\node (32) at (2.75,1.43) {};
\node (33) at (2.5,1.87) {};
\node (34) at (2,1.87) {};
\node (41) at (2.75,0.57) {};
\node (42) at (3.25,0.57) {};
\node (43) at (3.5,1) {};
\node (44) at (3.25,1.43) {};
\draw[edge] (11)--(12)--(13)--(14)--(15)--(16)--(11);
\draw[edge] (12)--(21)--(22)--(23)--(24)--(13);
\draw[edge] (23)--(31)--(32)--(33)--(34)--(24);
\draw[edge] (31)--(41)--(42)--(43)--(44)--(32);
\end{tikzpicture}
}
\subfigure[bee, $4343$]{
\begin{tikzpicture}[scale=1,xscale=1,rotate=30]
\tikzstyle{every node} = [inner sep=1.2, draw, circle,fill=gray!60]
\tikzstyle{edge} = [draw, line width=1.0]
\tikzstyle{periedge} = [draw, line width=1.0]
\node (11) at (0.5,1) {};
\node (12) at (1,1) {};
\node (13) at (1.25,1.43) {};
\node (14) at (1,1.87) {};
\node (15) at (0.5,1.87) {};
\node (16) at (0.25,1.43) {};
\node (21) at (1.25,0.57) {};
\node (22) at (1.75,0.57) {};
\node (23) at (2,1) {};
\node (24) at (1.75,1.43) {};
\node (31) at (2.5,1) {};
\node (32) at (2.75,1.43) {};
\node (33) at (2.5,1.87) {};
\node (34) at (2,1.87) {};
\node (41) at (1.75,2.3) {};
\node (42) at (1.25,2.3) {};
\draw[edge] (11)--(12)--(13)--(14)--(15)--(16)--(11);
\draw[edge] (12)--(21)--(22)--(23)--(24)--(13);
\draw[edge] (23)--(31)--(32)--(33)--(34)--(24);
\draw[edge] (34)--(41)--(42)--(14);
\end{tikzpicture}
}
\subfigure[arch, $533511$]{
\begin{tikzpicture}[scale=1,xscale=1]
\tikzstyle{every node} = [inner sep=1.2, draw, circle,fill=gray!60]
\tikzstyle{edge} = [draw, line width=1.0]
\tikzstyle{periedge} = [draw, line width=1.0]
\node (h11) at (2.6,0) {};
\node (h12) at (3.03,0.25) {};
\node (h13) at (3.03,0.75) {};
\node (h14) at (2.6,1) {};
\node (h15) at (2.17,0.75) {};
\node (h16) at (2.17,0.25) {};
\node (h21) at (2.6,1.5) {};
\node (h22) at (2.17,1.75) {};
\node (h23) at (1.73,1.5) {};
\node (h24) at (1.73,1) {};
\node (h31) at (1.3,1.75) {};
\node (h32) at (0.87,1.5) {};
\node (h33) at (0.87,1) {};
\node (h34) at (1.3,0.75) {};
\node (h41) at (1.3,0.25) {};
\node (h42) at (0.87,0) {};
\node (h43) at (0.44,0.25) {};
\node (h44) at (0.44,0.75) {};
\draw[edge] (h11)--(h12)--(h13)--(h14)--(h15)--(h16)--(h11);
\draw[edge] (h14)--(h21)--(h22)--(h23)--(h24)--(h15);
\draw[edge] (h24)--(h34)--(h33)--(h32)--(h31)--(h23);
\draw[edge] (h34)--(h41)--(h42)--(h43)--(h44)--(h33);
\end{tikzpicture}
}
\\
\subfigure[propeller, $533511$]{
\;\; \begin{tikzpicture}[scale=1,xscale=1]
\tikzstyle{every node} = [inner sep=1.2, draw, circle,fill=gray!60]
\tikzstyle{edge} = [draw, line width=1.0]
\tikzstyle{periedge} = [draw, line width=1.0]
\node (h11) at (2.6,0) {};
\node (h12) at (3.03,0.25) {};
\node (h13) at (3.03,0.75) {};
\node (h14) at (2.6,1) {};
\node (h15) at (2.17,0.75) {};
\node (h16) at (2.17,0.25) {};
\node (h21) at (2.6,1.5) {};
\node (h22) at (2.17,1.75) {};
\node (h23) at (1.73,1.5) {};
\node (h24) at (1.73,1) {};
\node (h31) at (1.3,1.75) {};
\node (h32) at (0.87,1.5) {};
\node (h33) at (0.87,1) {};
\node (h34) at (1.3,0.75) {};
\node (h41) at (2.17,2.25) {};
\node (h42) at (2.6,2.5) {};
\node (h43) at (3.03,2.25) {};
\node (h44) at (3.03,1.75) {};
\draw[edge] (h11)--(h12)--(h13)--(h14)--(h15)--(h16)--(h11);
\draw[edge] (h14)--(h21)--(h22)--(h23)--(h24)--(h15);
\draw[edge] (h24)--(h34)--(h33)--(h32)--(h31)--(h23);
\draw[edge] (h22)--(h41)--(h42)--(h43)--(h44)--(h21);
\end{tikzpicture}\;\;
}
\subfigure[worm, $512523$]{
\begin{tikzpicture}[scale=1,xscale=1]
\tikzstyle{every node} = [inner sep=1.2, draw, circle,fill=gray!60]
\tikzstyle{edge} = [draw, line width=1.0]
\tikzstyle{periedge} = [draw, line width=1.0]
\node (h21) at (0.87,1) {};
\node (h22) at (0.43,0.75) {};
\node (h23) at (0,1) {};
\node (h24) at (0,1.5) {};
\node (h25) at (0.43,1.75) {};
\node (h26) at (0.87,1.5) {};
\node (h31) at (1.3,1.75) {};
\node (h32) at (1.73,1.5) {};
\node (h33) at (1.73,1) {};
\node (h34) at (1.3,0.75) {};
\node (h41) at (2.17,1.75) {};
\node (h42) at (2.6,1.5) {};
\node (h43) at (2.6,1) {};
\node (h44) at (2.17,0.75) {};
\node (h51) at (2.17,2.25) {};
\node (h52) at (2.6,2.5) {};
\node (h53) at (3.04,2.25) {};
\node (h54) at (3.04,1.75) {};
\draw[edge] (h21)--(h22)--(h23)--(h24)--(h25)--(h26)--(h21);
\draw[edge] (h26)--(h31)--(h32)--(h33)--(h34)--(h21);
\draw[edge] (h32)--(h41)--(h42)--(h43)--(h44)--(h33);
\draw[edge] (h41)--(h51)--(h52)--(h53)--(h54)--(h42);
\end{tikzpicture}
}
\subfigure[bar, $522522$]{
\begin{tikzpicture}[scale=1,xscale=1]
\tikzstyle{every node} = [inner sep=1.2, draw, circle,fill=gray!60]
\tikzstyle{edge} = [draw, line width=1.0]
\tikzstyle{periedge} = [draw, line width=1.0]
\node (h21) at (0.87,1) {};
\node (h22) at (0.43,0.75) {};
\node (h23) at (0,1) {};
\node (h24) at (0,1.5) {};
\node (h25) at (0.43,1.75) {};
\node (h26) at (0.87,1.5) {};
\node (h31) at (1.3,1.75) {};
\node (h32) at (1.73,1.5) {};
\node (h33) at (1.73,1) {};
\node (h34) at (1.3,0.75) {};
\node (h41) at (2.17,1.75) {};
\node (h42) at (2.6,1.5) {};
\node (h43) at (2.6,1) {};
\node (h44) at (2.17,0.75) {};
\node (h51) at (3.04,1.75) {};
\node (h52) at (3.47,1.5) {};
\node (h53) at (3.47,1) {};
\node (h54) at (3.04,0.75) {};
\draw[edge] (h21)--(h22)--(h23)--(h24)--(h25)--(h26)--(h21);
\draw[edge] (h26)--(h31)--(h32)--(h33)--(h34)--(h21);
\draw[edge] (h32)--(h41)--(h42)--(h43)--(h44)--(h33);
\draw[edge] (h42)--(h51)--(h52)--(h53)--(h54)--(h43);
\end{tikzpicture}
}
\caption{The seven polyhexes composed of four hexagons \cite{Gardner1978}, and their BEC.}
\label{fig:theseven}
\end{figure}
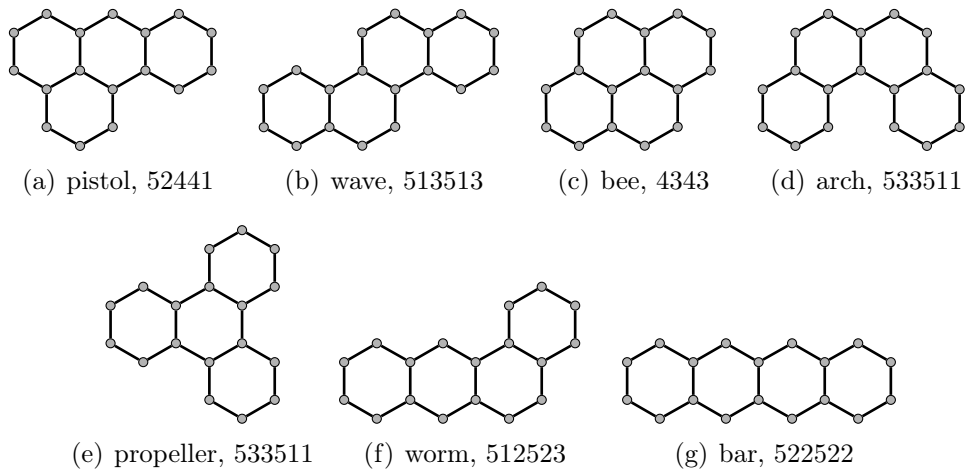

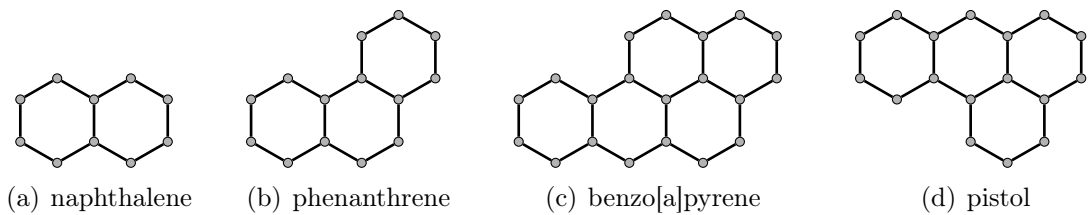
\begin{figure}[!htbp]
\centering
\subfigure[naphthalene]{
\begin{tikzpicture}[scale=0.4]
\tikzstyle{every node} = [inner sep=1.2, draw, circle,fill=gray!60]
\tikzstyle{edge} = [draw, line width=1.0]
\tikzstyle{periedge} = [draw, line width=1.0]
\node[] (-1_0_1) at (-1.212436, 0.700000) {};
\node[] (0_0_0) at (0.000000, 1.400000) {};
\node[] (1_-1_0) at (1.212436, -0.700000) {};
\node[] (-1_-1_1) at (-2.424871, -1.400000) {};
\node[] (-1_0_0) at (-2.424871, 1.400000) {};
\node[] (-1_-1_0) at (-3.637307, -0.700000) {};
\node[] (0_0_1) at (1.212436, 0.700000) {};
\node[] (-2_0_1) at (-3.637307, 0.700000) {};
\node[] (0_-1_1) at (0.000000, -1.400000) {};
\node[] (0_-1_0) at (-1.212436, -0.700000) {};
\draw[periedge] (-1_0_0) -- (-1_0_1);
\draw[periedge] (-1_0_1) -- (0_0_0);
\draw[periedge] (0_0_1) -- (1_-1_0);
\draw[edge] (-1_0_1) -- (0_-1_0);
\draw[periedge] (-1_-1_0) -- (-1_-1_1);
\draw[periedge] (-2_0_1) -- (-1_0_0);
\draw[periedge] (-2_0_1) -- (-1_-1_0);
\draw[periedge] (0_0_0) -- (0_0_1);
\draw[periedge] (0_-1_1) -- (1_-1_0);
\draw[periedge] (0_-1_0) -- (0_-1_1);
\draw[periedge] (-1_-1_1) -- (0_-1_0);
\end{tikzpicture}
}
\quad
\subfigure[phenanthrene]{
\begin{tikzpicture}[scale=-0.4]
\tikzstyle{every node} = [inner sep=1.2, draw, circle,fill=gray!60]
\tikzstyle{edge} = [draw, line width=1.0]
\tikzstyle{periedge} = [draw, line width=1.0]
\node[] (-1_-2_1) at (-3.637307, -3.500000) {};
\node[] (0_0_1) at (1.212436, 0.700000) {};
\node[] (-2_0_1) at (-3.637307, 0.700000) {};
\node[] (1_-1_0) at (1.212436, -0.700000) {};
\node[] (-1_-2_0) at (-4.849742, -2.800000) {};
\node[] (-1_-1_1) at (-2.424871, -1.400000) {};
\node[] (-1_0_1) at (-1.212436, 0.700000) {};
\node[] (0_0_0) at (0.000000, 1.400000) {};
\node[] (0_-2_0) at (-2.424871, -2.800000) {};
\node[] (0_-1_1) at (0.000000, -1.400000) {};
\node[] (-1_0_0) at (-2.424871, 1.400000) {};
\node[] (-1_-1_0) at (-3.637307, -0.700000) {};
\node[] (-2_-1_1) at (-4.849742, -1.400000) {};
\node[] (0_-1_0) at (-1.212436, -0.700000) {};
\draw[periedge] (-1_-1_1) -- (0_-2_0);
\draw[periedge] (-2_-1_1) -- (-1_-2_0);
\draw[periedge] (0_0_1) -- (1_-1_0);
\draw[edge] (-1_0_1) -- (0_-1_0);
\draw[edge] (-1_-1_0) -- (-1_-1_1);
\draw[periedge] (-2_0_1) -- (-1_-1_0);
\draw[periedge] (0_0_0) -- (0_0_1);
\draw[periedge] (0_-1_1) -- (1_-1_0);
\draw[periedge] (-2_-1_1) -- (-1_-1_0);
\draw[periedge] (-1_-2_0) -- (-1_-2_1);
\draw[periedge] (-1_0_0) -- (-1_0_1);
\draw[periedge] (-1_0_1) -- (0_0_0);
\draw[periedge] (-1_-2_1) -- (0_-2_0);
\draw[periedge] (0_-1_0) -- (0_-1_1);
\draw[periedge] (-2_0_1) -- (-1_0_0);
\draw[periedge] (-1_-1_1) -- (0_-1_0);
\end{tikzpicture}
}
\quad
\subfigure[{benzo[a]pyrene}]{
\begin{tikzpicture}[scale=0.4,xscale=-1]
\tikzstyle{every node} = [inner sep=1.2, draw, circle,fill=gray!60]
\tikzstyle{edge} = [draw, line width=1.0]
\tikzstyle{periedge} = [draw, line width=1.0]
\node[] (-2_1_0) at (-3.637307, 3.500000) {};
\node[] (-4_1_1) at (-7.274613, 2.800000) {};
\node[] (-3_0_0) at (-7.274613, 1.400000) {};
\node[] (-2_1_1) at (-2.424871, 2.800000) {};
\node[] (-2_-1_1) at (-4.849742, -1.400000) {};
\node[] (-3_0_1) at (-6.062178, 0.700000) {};
\node[] (0_0_1) at (1.212436, 0.700000) {};
\node[] (-2_0_1) at (-3.637307, 0.700000) {};
\node[] (1_-1_0) at (1.212436, -0.700000) {};
\node[] (-2_-1_0) at (-6.062178, -0.700000) {};
\node[] (-1_-1_1) at (-2.424871, -1.400000) {};
\node[] (-1_0_1) at (-1.212436, 0.700000) {};
\node[] (0_0_0) at (0.000000, 1.400000) {};
\node[] (0_-1_1) at (0.000000, -1.400000) {};
\node[] (-3_1_1) at (-4.849742, 2.800000) {};
\node[] (-1_0_0) at (-2.424871, 1.400000) {};
\node[] (-2_0_0) at (-4.849742, 1.400000) {};
\node[] (-1_-1_0) at (-3.637307, -0.700000) {};
\node[] (-3_1_0) at (-6.062178, 3.500000) {};
\node[] (0_-1_0) at (-1.212436, -0.700000) {};
\draw[periedge] (-4_1_1) -- (-3_0_0);
\draw[periedge] (0_0_1) -- (1_-1_0);
\draw[edge] (-1_0_1) -- (0_-1_0);
\draw[periedge] (-3_0_0) -- (-3_0_1);
\draw[periedge] (0_0_0) -- (0_0_1);
\draw[periedge] (0_-1_0) -- (0_-1_1);
\draw[periedge] (-2_1_1) -- (-1_0_0);
\draw[periedge] (-1_0_1) -- (0_0_0);
\draw[periedge] (-1_-1_1) -- (0_-1_0);
\draw[periedge] (-3_1_0) -- (-3_1_1);
\draw[edge] (-3_0_1) -- (-2_0_0);
\draw[periedge] (-4_1_1) -- (-3_1_0);
\draw[edge] (-2_0_1) -- (-1_-1_0);
\draw[periedge] (-3_1_1) -- (-2_1_0);
\draw[periedge] (-2_1_0) -- (-2_1_1);
\draw[periedge] (-2_-1_0) -- (-2_-1_1);
\draw[edge] (-3_1_1) -- (-2_0_0);
\draw[edge] (-2_0_0) -- (-2_0_1);
\draw[periedge] (0_-1_1) -- (1_-1_0);
\draw[periedge] (-2_-1_1) -- (-1_-1_0);
\draw[periedge] (-1_-1_0) -- (-1_-1_1);
\draw[periedge] (-1_0_0) -- (-1_0_1);
\draw[periedge] (-3_0_1) -- (-2_-1_0);
\draw[edge] (-2_0_1) -- (-1_0_0);
\end{tikzpicture}
}
\quad
\subfigure[pistol]{
\begin{tikzpicture}[scale=0.4,xscale=-1]
\tikzstyle{every node} = [inner sep=1.2, draw, circle,fill=gray!60]
\tikzstyle{edge} = [draw, line width=1.0]
\tikzstyle{periedge} = [draw, line width=1.0]
\node[] (0_-2_0) at (-2.424871, -2.800000) {};
\node[] (-2_-1_1) at (-4.849742, -1.400000) {};
\node[] (-1_0_0) at (-2.424871, 1.400000) {};
\node[] (-3_0_1) at (-6.062178, 0.700000) {};
\node[] (0_0_1) at (1.212436, 0.700000) {};
\node[] (-2_0_1) at (-3.637307, 0.700000) {};
\node[] (1_-1_0) at (1.212436, -0.700000) {};
\node[] (-2_-1_0) at (-6.062178, -0.700000) {};
\node[] (-1_-1_1) at (-2.424871, -1.400000) {};
\node[] (-1_0_1) at (-1.212436, 0.700000) {};
\node[] (0_0_0) at (0.000000, 1.400000) {};
\node[] (0_-1_1) at (0.000000, -1.400000) {};
\node[] (-1_-2_0) at (-4.849742, -2.800000) {};
\node[] (-2_0_0) at (-4.849742, 1.400000) {};
\node[] (-1_-1_0) at (-3.637307, -0.700000) {};
\node[] (-1_-2_1) at (-3.637307, -3.500000) {};
\node[] (0_-1_0) at (-1.212436, -0.700000) {};
\draw[periedge] (-1_-1_1) -- (0_-2_0);
\draw[periedge] (-1_-1_1) -- (0_-1_0);
\draw[periedge] (-2_-1_1) -- (-1_-2_0);
\draw[periedge] (0_0_1) -- (1_-1_0);
\draw[edge] (-1_0_1) -- (0_-1_0);
\draw[periedge] (-2_-1_0) -- (-2_-1_1);
\draw[edge] (-2_0_1) -- (-1_-1_0);
\draw[periedge] (0_0_0) -- (0_0_1);
\draw[periedge] (-1_0_0) -- (-1_0_1);
\draw[periedge] (0_-1_1) -- (1_-1_0);
\draw[edge] (-2_-1_1) -- (-1_-1_0);
\draw[edge] (-1_-1_0) -- (-1_-1_1);
\draw[periedge] (-2_0_0) -- (-2_0_1);
\draw[periedge] (-1_0_1) -- (0_0_0);
\draw[periedge] (0_-1_0) -- (0_-1_1);
\draw[periedge] (-3_0_1) -- (-2_-1_0);
\draw[periedge] (-2_0_1) -- (-1_0_0);
\draw[periedge] (-3_0_1) -- (-2_0_0);
\draw[periedge] (-1_-2_0) -- (-1_-2_1);
\draw[periedge] (-1_-2_1) -- (0_-2_0);
\end{tikzpicture}
}
\caption{Small examples of (a) convex, (b) pseudo-convex and (c), (d)
  quasi-convex benzenoids.}
\label{fig:thefoursmall}
\end{figure}

We have developed software that transforms the boundary edges code to the
description of a benzenoid via position of its hexagons in the hexagonal
tesselation of the plane, as well as a tool that can draw the corresponding
benzenoid. We can also compute several parameters such as convexity deficit
(of course, convexity deficit is obtained directly from the BEC). We present computational
results in Tables \ref{table:small} and \ref{table:family}. 
One is a table of small benzenoids, together with
their names and basic properties. The other lists some of the infinite
families of benzenoids. Some of the representatives of families presented in Table \ref{table:family}
are depicted in Figure \ref{fig:examples}.

\begin{table}[!p]
  \caption{List of small benzenoids and their features. The list is
    complete up to $4$ hexagons.}
\label{table:small}
\centering

 \small
\vspace{0.5\baselineskip}
\setlength{\tabcolsep}{3pt}
\begin{tabular}{| l l r l r | l l |}
  \hline
  \textbf{Name} & \textbf{BEC} & $\bm{h}$ & \textbf{Class} & $\bm{\cd}$ & \textbf{Formula} & \textbf{CAS} \\
  \hline
  \hline
  benzene & 6 & 1 & convex & 0 & $\mathrm{C}_6\mathrm{H}_6$ & 71-43-2 \\
  \hline
  naphthalene & 55 & 2 & convex & 0 & $\mathrm{C}_{10}\mathrm{H}_{8}$ & 91-20-3 \\
  \hline
  phenalene/phenalenyl & 444 & 3 & convex & 0& $\mathrm{C}_{13}\mathrm{H}_{9}$ & 203-80-5 \\
  anthracene & 5252 & 3 & convex & 0& $\mathrm{C}_{14}\mathrm{H}_{10}$ & 120-12-7 \\
  phenanthrene & 5351 & 3 & pseudo-convex & 1& $\mathrm{C}_{14}\mathrm{H}_{10}$ & 85-01-8 \\
  \hline
  tetracene/naphthacene & 522522 & 4 & convex & 0& $\mathrm{C}_{18}\mathrm{H}_{12}$ & 92-24-0 \\
  pyrene & 4343 & 4 & convex & 0& $\mathrm{C}_{16}\mathrm{H}_{10}$ & 129-00-0 \\
  benzophenalenyl & 52441 & 4 & quasi-convex & 1& $\mathrm{C}_{17}\mathrm{H}_{11}$ & 112772-04-0 \\
  chrysene & 513513 & 4 & pseudo-convex & 1& $\mathrm{C}_{18}\mathrm{H}_{12}$ & 218-01-9 \\
  triphenylene & 515151 & 4 & pseudo-convex & 1& $\mathrm{C}_{18}\mathrm{H}_{12}$ & 217-59-4 \\
  benzo(c)phenanthrene & 533511 & 4 &  & 2& $\mathrm{C}_{18}\mathrm{H}_{12}$ & 195-19-7 \\
  benz(a)anthracene & 512523 & 4 &  & 2& $\mathrm{C}_{18}\mathrm{H}_{12}$ & 56-55-3 \\
  \hline
  olympicene/olimpicenyl & 42433 & 5 & convex & 0& $\mathrm{C}_{19}\mathrm{H}_{11}$ & 191-33-3 \\
  pentacene & 52225222 & 5 & convex & 0& $\mathrm{C}_{22}\mathrm{H}_{14}$ & 135-48-8 \\
  picene & 51315313 & 5 & pseudo-convex & 1& $\mathrm{C}_{22}\mathrm{H}_{14}$ & 213-46-7 \\
  {}[$5$]helicene & 53335111 & 5 &  & 3 & $\mathrm{C}_{22}\mathrm{H}_{14}$ & 188-52-3 \\ 
  perylene & 441441 & 5 & pseudo-convex & 1& $\mathrm{C}_{20}\mathrm{H}_{12}$ & 198-55-0 \\
  benzo(a)pyrene & 513432 & 5 & quasi-convex & 1& $\mathrm{C}_{20}\mathrm{H}_{12}$ & 50-32-8 \\
  benzo(e)pyrene & 514341 & 5 & pseudo-convex & 1& $\mathrm{C}_{20}\mathrm{H}_{12}$ & 192-97-2 \\
  dibenz(a,h)anthracene & 53215321 & 5 &  & 2& $\mathrm{C}_{22}\mathrm{H}_{14}$ & 53-70-3 \\
  pentaphene & 52125232 & 5 &  & 3& $\mathrm{C}_{22}\mathrm{H}_{14}$ & 222-93-5 \\
  dibenz(a,j)anthracene & 51215323 & 5 &  & 3& $\mathrm{C}_{22}\mathrm{H}_{14}$ & 224-41-9 \\
  \hline
  triangulenyl & 424242 & 6 & convex & 0& $\mathrm{C}_{22}\mathrm{H}_{12}$ &  \\
  anthanthrene & 324324 & 6 & convex & 0& $\mathrm{C}_{22}\mathrm{H}_{12}$ & 191-26-4 \\
  hexacene & 5222252222 & 6 & convex & 0& $\mathrm{C}_{26}\mathrm{H}_{16}$ & 258-31-1 \\
  benzo(ghi)perylene & 414333 & 6 & pseudo-convex & 1& $\mathrm{C}_{22}\mathrm{H}_{12}$ & 191-24-2 \\
  zethrene & 42144214 & 6 &  &2& $\mathrm{C}_{24}\mathrm{H}_{14}$ & 214-63-1 \\
  \hline
  coronene & 333333 & 7 & convex & 0& $\mathrm{C}_{24}\mathrm{H}_{12}$ & 191-07-1 \\
  heptacene & 522222522222 & 7 & convex & 0& $\mathrm{C}_{30}\mathrm{H}_{18}$ & 258-38-8 \\
  peropyrene  & 43134313 &7& pseudo-convex  & 1 & $\mathrm{C}_{26}\mathrm{H}_{14}$ & 188-96-5 \\
  \hline
  terrylene  & 4413144131 &8& pseudo-convex  & 1& $\mathrm{C}_{30}\mathrm{H}_{16}$ & 188-72-7 \\
  tribenzo[b,n,pqr]perylene & 5141251331 & 8 & & 2 & $\mathrm{C}_{30}\mathrm{H}_{16}$ & 190-81-8 \\ 
  tribenzo[b,k,pqr]perylene  & 5141415131 & 8 &pseudo-convex &1 & $\mathrm{C}_{30}\mathrm{H}_{16}$ &  \\ 
  tribenzo[b,ghi,n]perylene  & 5141251331 & 8 &  & 2 & $\mathrm{C}_{30}\mathrm{H}_{16}$ &  \\ 
  \hline
  ovalene & 33323332 & 10 & convex & 0& $\mathrm{C}_{32}\mathrm{H}_{14}$ & 190-26-1 \\
  teropyrene & 431313431313 & 10& pseudo-convex & 1& $\mathrm{C}_{36}\mathrm{H}_{18}$ &  \\
  \hline
 {\footnotesize hexabenzo[bc,ef,hi,kl,no,qr]coronene} & 414141414141 &13& pseudo-convex & 1& $\mathrm{C}_{42}\mathrm{H}_{18}$ & 190-24-9 \\
  {\footnotesize hexabenzo[a,d,g,j,m,p]coronene} & 511511511511511511 &13&  & 2 & $\mathrm{C}_{48}\mathrm{H}_{24}$ & 1065-80-1 \\
  \hline
  dicoronylene & 23333212333321 & 15 &  & 3& $\mathrm{C}_{48}\mathrm{H}_{20}$ & 98570-53-7 \\
  \hline
\end{tabular}
\end{table}

\begin{table}[p]
\centering
\caption{Some families of benzenoids and their convexity defect.}
\label{table:family}

\small
\renewcommand*{\arraystretch}{1.2}
\vspace{0.5\baselineskip}
\begin{tabular}{|l|l|l|l|l|l|}
\hline
 \multicolumn{2}{|l|}{\textbf{Benzenoid family}} & \textbf{BEC} \\
\hline
 $\bm{h(B)}$ & $\bm{\cd(B)}$ & \textbf{Source} \\
\hline
\hline
Linear & $L(n)$, $n\ge 2$ & $52^{n-2}52^{n-2}$ \\
\hline
$n$ & $0$ (convex) & \cite[p.~62]{cyvin_1988} \\
\hline
\hline
Two segments & $M_2(m,n)$, $m,n>1$ & $52^{m-2}12^{n-2}52^{n-2}32^{m-2}$ \\
\hline
$m + n - 1$ & $m +  n - 3$ & \cite[p.~62]{cyvin_1988} \\
\hline
\hline
Three segments & $M_3(m,n,k)$, $m,n,k>1$ & $52^{k-2}12^{m-2}12^{n-2}52^{n-2}32^{m-2}32^{k-2}$ \\
\hline
$m + n + k - 2 $ & $m + n + k - 4$ & \cite[p.~62]{cyvin_1988} \\
\hline
\hline
Three segments &  $Z_3(m,n,k)$, $m,n,k>1$ & $52^{n-2}12^{k-2}32^{m-2}52^{m-2}12^{k-2}32^{n-2}$ \\
\hline
$m + n + k - 2$ & $\max\{m, n\} + k - 3$ & \cite[p.~62]{cyvin_1988} \\
\hline
\hline
Chevron & $\Ch(n,m,k)$, $n,m,k \geq 2$ & $4 2^{n-2} 3 2^{k-2} 3 2^{m-2} 3 2^{n-2} 4 2^{m-2} 1 2^{k-2}$ \\
\hline
$n(m + k - 1)$ & $m + k - 3$ &\cite{gordon_1952,cyvin_1985}, \cite[p.~111]{cyvin_1988} \\
\hline
\hline
Prolate triangle & $P_3(m)$, $m \geq 2$ & $51(31)^{m-2}52^{m-2}32^{m-2}$ \\
\hline
$\frac{1}{2} m(m + 1)$  & $1$ (quasi-convex) & \cite[p.~182]{cyvin_1988} \\
\hline
\hline
Prolate pentagon & $P_5(m,n)$, $m,n \geq 2$ & $32^{n-2}41(31)^{m-2}42^{n-2}32^{m-2}32^{m-2}$ \\
\hline
$\frac{1}{2} m(m + 1) + (n - 1)(2m - 1)$ & $1$ (quasi-convex) & \cite[p.~182]{cyvin_1988} \\
\hline
\hline
Oblate triangle & $O_3(m)$, $m \geq 2$ & $43(13)^{m-2}42^{m-2}32^{m-2}$ \\
\hline
$\frac{1}{2} m(m + 1) + (m - 1)$ & $\begin{cases}0 \text{ (convex)} & m = 2 \\ 1  \text{ (quasi-convex)}   & m > 2  \end{cases}$ & \cite[p.~197]{cyvin_1988} \\
\hline
\hline
Problate triangle & $B_3(m)$, $m \geq 2$ & $4(31)^{m-1} 52^{m-1}32^{m-2}$ \\
\hline
$\frac{1}{2} m(m + 3)$ & $1$ (quasi-convex) & \cite[p.~197]{cyvin_1988} \\
\hline
\hline
Prolate rectangle &$P_4(m,n)$, $m,n \geq 2$ & $42^{n-2}4(13)^{m-2}142^{n-2}4(13)^{m-2}1$ \\
\hline
$nm + (n - 1)(m - 1)$ & $1$ (quasi-convex) &\cite{yen_1971}, \cite[p.~201]{cyvin_1988} \\
\hline
\hline
Dihedral all-benzenoids & $S(m)$, $m \geq 1$ & $51215(13)^{m-1}151215(13)^{m-1}1$ \\
\hline
$7m$ & 3 & \cite{zhang_1986}, \cite[p.~215]{cyvin_1988} \\
\hline
\hline
 & $T(2)$ & $51415141$ \\
\hline
6 & $1$ (pseudo-convex) & \cite{zhang_1986}, \cite[p.~215]{cyvin_1988} \\
\hline
\hline
 & $T(m)$, $m \geq 3$ & $41414(13)^{m-3}141414(13)^{m-3}1$ \\
\hline
$7m - 8$ & $1$ (pseudo-convex) & \cite{zhang_1986}, \cite[p.~215]{cyvin_1988} \\
\hline
\end{tabular}
\end{table}

\begin{figure}[!htbp]
\centering
\subfigure[$L(7)$]{
\begin{tikzpicture}[scale=0.67,xscale=1]
\tikzstyle{every node} = [inner sep=1.2, draw, circle,fill=gray!60]
\tikzstyle{edge} = [draw, line width=1.0]
\tikzstyle{periedge} = [draw, line width=1.0]
\node (11) at (0.87,1) {};
\node (12) at (1.3,0.75) {};
\node (13) at (1.3,0.25) {};
\node (14) at (0.87,0) {};
\node (15) at (0.43,0.25) {};
\node (16) at (0.43,0.75) {};
\node (21) at (1.73,1) {};
\node (22) at (2.17,0.75) {};
\node (23) at (2.17,0.25) {};
\node (24) at (1.73,0) {};
\node (31) at (2.6,1) {};
\node (32) at (3.03,0.75) {};
\node (33) at (3.03,0.25) {};
\node (34) at (2.6,0) {};
\node (41) at (3.46,1) {};
\node (42) at (3.89,0.75) {};
\node (43) at (3.89,0.25) {};
\node (44) at (3.46,0) {};
\node (51) at (4.32,1) {};
\node (52) at (4.75,0.75) {};
\node (53) at (4.75,0.25) {};
\node (54) at (4.32,0) {};
\node (61) at (5.18,1) {};
\node (62) at (5.61,0.75) {};
\node (63) at (5.61,0.25) {};
\node (64) at (5.18,0) {};
\node (71) at (6.04,1) {};
\node (72) at (6.47,0.75) {};
\node (73) at (6.47,0.25) {};
\node (74) at (6.04,0) {};
\draw[edge] (11)--(12)--(13)--(14)--(15)--(16)--(11);
\draw[edge] (12)--(21)--(22)--(23)--(24)--(13);
\draw[edge] (22)--(31)--(32)--(33)--(34)--(23);
\draw[edge] (32)--(41)--(42)--(43)--(44)--(33);
\draw[edge] (42)--(51)--(52)--(53)--(54)--(43);
\draw[edge] (52)--(61)--(62)--(63)--(64)--(53);
\draw[edge] (62)--(71)--(72)--(73)--(74)--(63);
\end{tikzpicture}
}
\quad
\subfigure[$M_2(4, 3)$]{
\begin{tikzpicture}[scale=0.67,xscale=1]
\tikzstyle{every node} = [inner sep=1.2, draw, circle,fill=gray!60]
\tikzstyle{edge} = [draw, line width=1.0]
\tikzstyle{periedge} = [draw, line width=1.0]
\node (11) at (2.6,1) {};
\node (12) at (3.03,0.75) {};
\node (13) at (3.03,0.25) {};
\node (14) at (2.6,0) {};
\node (15) at (2.17,0.25) {};
\node (16) at (2.17,0.75) {};
\node (21) at (3.46,1) {};
\node (22) at (3.9,0.75) {};
\node (23) at (3.9,0.25) {};
\node (24) at (3.46,0) {};
\node (31) at (4.33,1) {};
\node (32) at (4.76,0.75) {};
\node (33) at (4.76,0.25) {};
\node (34) at (4.33,0) {};
\node (41) at (2.6,1.5) {};
\node (42) at (2.17,1.75) {};
\node (43) at (1.73,1.5) {};
\node (44) at (1.73,1) {};
\node (51) at (2.17,2.25) {};
\node (52) at (1.73,2.5) {};
\node (53) at (1.3,2.25) {};
\node (54) at (1.3,1.75) {};
\node (61) at (1.73,3) {};
\node (62) at (1.3,3.25) {};
\node (63) at (0.87,3) {};
\node (64) at (0.87,2.5) {};
\draw[edge] (11)--(12)--(13)--(14)--(15)--(16)--(11);
\draw[edge] (12)--(21)--(22)--(23)--(24)--(13);
\draw[edge] (22)--(31)--(32)--(33)--(34)--(23);
\draw[edge] (11)--(41)--(42)--(43)--(44)--(16);
\draw[edge] (42)--(51)--(52)--(53)--(54)--(43);
\draw[edge] (52)--(61)--(62)--(63)--(64)--(53);
\end{tikzpicture}
}
\quad
\subfigure[$M_3(4, 3, 5)$]{
\begin{tikzpicture}[scale=0.67,xscale=1]
\tikzstyle{every node} = [inner sep=1.2, draw, circle,fill=gray!60]
\tikzstyle{edge} = [draw, line width=1.0]
\tikzstyle{periedge} = [draw, line width=1.0]
\node (11) at (2.6,1) {};
\node (12) at (3.03,0.75) {};
\node (13) at (3.03,0.25) {};
\node (14) at (2.6,0) {};
\node (15) at (2.17,0.25) {};
\node (16) at (2.17,0.75) {};
\node (21) at (3.46,1) {};
\node (22) at (3.9,0.75) {};
\node (23) at (3.9,0.25) {};
\node (24) at (3.46,0) {};
\node (31) at (4.33,1) {};
\node (32) at (4.76,0.75) {};
\node (33) at (4.76,0.25) {};
\node (34) at (4.33,0) {};
\node (41) at (2.6,1.5) {};
\node (42) at (2.17,1.75) {};
\node (43) at (1.73,1.5) {};
\node (44) at (1.73,1) {};
\node (51) at (2.17,2.25) {};
\node (52) at (1.73,2.5) {};
\node (53) at (1.3,2.25) {};
\node (54) at (1.3,1.75) {};
\node (61) at (5.2,1) {};
\node (62) at (5.63,0.75) {};
\node (63) at (5.63,0.25) {};
\node (64) at (5.2,0) {};
\node (71) at (6.06,1) {};
\node (72) at (6.5,0.75) {};
\node (73) at (6.5,0.25) {};
\node (74) at (6.06,0) {};
\node (81) at (1.73,3) {};
\node (82) at (2.17,3.25) {};
\node (83) at (2.6,3) {};
\node (84) at (2.6,2.5) {};
\node (91) at (2.17,3.75) {};
\node (92) at (2.6,4) {};
\node (93) at (3.03,3.75) {};
\node (94) at (3.03,3.25) {};
\node (01) at (2.6,4.5) {};
\node (02) at (3.03,4.75) {};
\node (03) at (3.46,4.5) {};
\node (04) at (3.46,4) {};
\draw[edge] (11)--(12)--(13)--(14)--(15)--(16)--(11);
\draw[edge] (12)--(21)--(22)--(23)--(24)--(13);
\draw[edge] (22)--(31)--(32)--(33)--(34)--(23);
\draw[edge] (11)--(41)--(42)--(43)--(44)--(16);
\draw[edge] (42)--(51)--(52)--(53)--(54)--(43);
\draw[edge] (32)--(61)--(62)--(63)--(64)--(33);
\draw[edge] (62)--(71)--(72)--(73)--(74)--(63);
\draw[edge] (52)--(81)--(82)--(83)--(84)--(51);
\draw[edge] (82)--(91)--(92)--(93)--(94)--(83);
\draw[edge] (92)--(01)--(02)--(03)--(04)--(93);
\end{tikzpicture}
}
\subfigure[$Z_3(3, 5, 4)$]{
\begin{tikzpicture}[scale=0.67,xscale=1]
\tikzstyle{every node} = [inner sep=1.2, draw, circle,fill=gray!60]
\tikzstyle{edge} = [draw, line width=1.0]
\tikzstyle{periedge} = [draw, line width=1.0]
\node (11) at (2.6,1) {};
\node (12) at (3.03,0.75) {};
\node (13) at (3.03,0.25) {};
\node (14) at (2.6,0) {};
\node (15) at (2.17,0.25) {};
\node (16) at (2.17,0.75) {};
\node (21) at (3.46,1) {};
\node (22) at (3.9,0.75) {};
\node (23) at (3.9,0.25) {};
\node (24) at (3.46,0) {};
\node (31) at (4.33,1) {};
\node (32) at (4.76,0.75) {};
\node (33) at (4.76,0.25) {};
\node (34) at (4.33,0) {};
\node (41) at (2.6,1.5) {};
\node (42) at (2.17,1.75) {};
\node (43) at (1.73,1.5) {};
\node (44) at (1.73,1) {};
\node (51) at (2.17,2.25) {};
\node (52) at (1.73,2.5) {};
\node (53) at (1.3,2.25) {};
\node (54) at (1.3,1.75) {};
\node (61) at (1.73,3) {};
\node (62) at (1.3,3.25) {};
\node (63) at (0.87,3) {};
\node (64) at (0.87,2.5) {};
\node (71) at (1.3,3.75) {};
\node (72) at (0.87,4) {};
\node (73) at (0.43,3.75) {};
\node (74) at (0.43,3.25) {};
\node (81) at (0,4) {};
\node (82) at (-0.43,3.75) {};
\node (83) at (-0.43,3.25) {};
\node (84) at (0,3) {};
\node (91) at (-0.87,4) {};
\node (92) at (-1.3,3.75) {};
\node (93) at (-1.3,3.25) {};
\node (94) at (-0.87,3) {};
\node (01) at (5.2,1) {};
\node (02) at (5.63,0.75) {};
\node (03) at (5.63,0.25) {};
\node (04) at (5.2,0) {};
\draw[edge] (11)--(12)--(13)--(14)--(15)--(16)--(11);
\draw[edge] (12)--(21)--(22)--(23)--(24)--(13);
\draw[edge] (22)--(31)--(32)--(33)--(34)--(23);
\draw[edge] (11)--(41)--(42)--(43)--(44)--(16);
\draw[edge] (42)--(51)--(52)--(53)--(54)--(43);
\draw[edge] (52)--(61)--(62)--(63)--(64)--(53);
\draw[edge] (62)--(71)--(72)--(73)--(74)--(63);
\draw[edge] (73)--(81)--(82)--(83)--(84)--(74);
\draw[edge] (82)--(91)--(92)--(93)--(94)--(83);
\draw[edge] (32)--(01)--(02)--(03)--(04)--(33);
\end{tikzpicture}
}
\quad
\subfigure[$\Ch(3, 4, 2)$]{
\begin{tikzpicture}[scale=0.238,rotate=60]
\tikzstyle{every node} = [inner sep=1.2, draw, circle,fill=gray!60]
\tikzstyle{edge} = [draw, line width=1.0]
\tikzstyle{periedge} = [draw, line width=1.0]
\node (0_0_0) at (0.000000, 1.400000) {};
\node (0_0_1) at (1.212436, 0.700000) {};
\node (1_-1_0) at (1.212436, -0.700000) {};
\node (0_-1_1) at (0.000000, -1.400000) {};
\node (0_-1_0) at (-1.212436, -0.700000) {};
\node (-1_0_1) at (-1.212436, 0.700000) {};
\node (-1_-3_0) at (-6.062178, -4.900000) {};
\node (-1_-3_1) at (-4.849742, -5.600000) {};
\node (0_-4_0) at (-4.849742, -7.000000) {};
\node (-1_-4_1) at (-6.062178, -7.700000) {};
\node (-1_-4_0) at (-7.274613, -7.000000) {};
\node (-2_-3_1) at (-7.274613, -5.600000) {};
\node (-1_-2_0) at (-4.849742, -2.800000) {};
\node (-1_-2_1) at (-3.637307, -3.500000) {};
\node (0_-3_0) at (-3.637307, -4.900000) {};
\node (-2_-2_1) at (-6.062178, -3.500000) {};
\node (-1_-1_0) at (-3.637307, -0.700000) {};
\node (-1_-1_1) at (-2.424871, -1.400000) {};
\node (0_-2_0) at (-2.424871, -2.800000) {};
\node (-2_-1_1) at (-4.849742, -1.400000) {};
\node (-1_0_0) at (-2.424871, 1.400000) {};
\node (-2_0_1) at (-3.637307, 0.700000) {};
\node (-1_1_0) at (-1.212436, 3.500000) {};
\node (-1_1_1) at (0.000000, 2.800000) {};
\node (-2_1_1) at (-2.424871, 2.800000) {};
\node (-2_-2_0) at (-7.274613, -2.800000) {};
\node (-2_-3_0) at (-8.487049, -4.900000) {};
\node (-3_-2_1) at (-8.487049, -3.500000) {};
\node (-2_-1_0) at (-6.062178, -0.700000) {};
\node (-3_-1_1) at (-7.274613, -1.400000) {};
\node (-2_0_0) at (-4.849742, 1.400000) {};
\node (-3_0_1) at (-6.062178, 0.700000) {};
\node (-2_1_0) at (-3.637307, 3.500000) {};
\node (-3_1_1) at (-4.849742, 2.800000) {};
\node (-2_2_0) at (-2.424871, 5.600000) {};
\node (-2_2_1) at (-1.212436, 4.900000) {};
\node (-3_2_1) at (-3.637307, 4.900000) {};
\node (-3_-1_0) at (-8.487049, -0.700000) {};
\node (-3_-2_0) at (-9.699485, -2.800000) {};
\node (-4_-1_1) at (-9.699485, -1.400000) {};
\node (-3_0_0) at (-7.274613, 1.400000) {};
\node (-4_0_1) at (-8.487049, 0.700000) {};
\node (-3_1_0) at (-6.062178, 3.500000) {};
\node (-4_1_1) at (-7.274613, 2.800000) {};
\node (-3_2_0) at (-4.849742, 5.600000) {};
\node (-4_2_1) at (-6.062178, 4.900000) {};
\draw[edge] (-1_0_1) -- (0_0_0);
\draw[periedge] (0_0_0) -- (0_0_1);
\draw[periedge] (0_0_1) -- (1_-1_0);
\draw[periedge] (0_-1_1) -- (1_-1_0);
\draw[periedge] (0_-1_0) -- (0_-1_1);
\draw[edge] (-1_0_1) -- (0_-1_0);
\draw[edge] (-2_-3_1) -- (-1_-3_0);
\draw[edge] (-1_-3_0) -- (-1_-3_1);
\draw[periedge] (-1_-3_1) -- (0_-4_0);
\draw[periedge] (-1_-4_1) -- (0_-4_0);
\draw[periedge] (-1_-4_0) -- (-1_-4_1);
\draw[periedge] (-2_-3_1) -- (-1_-4_0);
\draw[edge] (-2_-2_1) -- (-1_-2_0);
\draw[edge] (-1_-2_0) -- (-1_-2_1);
\draw[periedge] (-1_-2_1) -- (0_-3_0);
\draw[periedge] (-1_-3_1) -- (0_-3_0);
\draw[edge] (-2_-2_1) -- (-1_-3_0);
\draw[edge] (-2_-1_1) -- (-1_-1_0);
\draw[edge] (-1_-1_0) -- (-1_-1_1);
\draw[periedge] (-1_-1_1) -- (0_-2_0);
\draw[periedge] (-1_-2_1) -- (0_-2_0);
\draw[edge] (-2_-1_1) -- (-1_-2_0);
\draw[edge] (-2_0_1) -- (-1_0_0);
\draw[edge] (-1_0_0) -- (-1_0_1);
\draw[periedge] (-1_-1_1) -- (0_-1_0);
\draw[edge] (-2_0_1) -- (-1_-1_0);
\draw[edge] (-2_1_1) -- (-1_1_0);
\draw[periedge] (-1_1_0) -- (-1_1_1);
\draw[periedge] (-1_1_1) -- (0_0_0);
\draw[edge] (-2_1_1) -- (-1_0_0);
\draw[edge] (-3_-2_1) -- (-2_-2_0);
\draw[edge] (-2_-2_0) -- (-2_-2_1);
\draw[periedge] (-2_-3_0) -- (-2_-3_1);
\draw[periedge] (-3_-2_1) -- (-2_-3_0);
\draw[edge] (-3_-1_1) -- (-2_-1_0);
\draw[edge] (-2_-1_0) -- (-2_-1_1);
\draw[edge] (-3_-1_1) -- (-2_-2_0);
\draw[edge] (-3_0_1) -- (-2_0_0);
\draw[edge] (-2_0_0) -- (-2_0_1);
\draw[edge] (-3_0_1) -- (-2_-1_0);
\draw[edge] (-3_1_1) -- (-2_1_0);
\draw[edge] (-2_1_0) -- (-2_1_1);
\draw[edge] (-3_1_1) -- (-2_0_0);
\draw[periedge] (-3_2_1) -- (-2_2_0);
\draw[periedge] (-2_2_0) -- (-2_2_1);
\draw[periedge] (-2_2_1) -- (-1_1_0);
\draw[edge] (-3_2_1) -- (-2_1_0);
\draw[periedge] (-4_-1_1) -- (-3_-1_0);
\draw[edge] (-3_-1_0) -- (-3_-1_1);
\draw[periedge] (-3_-2_0) -- (-3_-2_1);
\draw[periedge] (-4_-1_1) -- (-3_-2_0);
\draw[periedge] (-4_0_1) -- (-3_0_0);
\draw[edge] (-3_0_0) -- (-3_0_1);
\draw[periedge] (-4_0_1) -- (-3_-1_0);
\draw[periedge] (-4_1_1) -- (-3_1_0);
\draw[edge] (-3_1_0) -- (-3_1_1);
\draw[periedge] (-4_1_1) -- (-3_0_0);
\draw[periedge] (-4_2_1) -- (-3_2_0);
\draw[periedge] (-3_2_0) -- (-3_2_1);
\draw[periedge] (-4_2_1) -- (-3_1_0);
\end{tikzpicture}
}
\quad
\subfigure[$P_3(3)$]{
\begin{tikzpicture}[scale=0.67,xscale=1,rotate=-60]
\tikzstyle{every node} = [inner sep=1.2, draw, circle,fill=gray!60]
\tikzstyle{edge} = [draw, line width=1.0]
\tikzstyle{periedge} = [draw, line width=1.0]
\node (11) at (2.17,0.25) {};
\node (12) at (2.17,0.75) {};
\node (13) at (2.6,1) {};
\node (14) at (3.03,0.75) {};
\node (15) at (3.03,0.25) {};
\node (16) at (2.6,0) {};
\node (21) at (3.46,1) {};
\node (22) at (3.9,0.75) {};
\node (23) at (3.9,0.25) {};
\node (24) at (3.46,0) {};
\node (31) at (4.33,1) {};
\node (32) at (4.76,0.75) {};
\node (33) at (4.76,0.25) {};
\node (34) at (4.33,0) {};
\node (41) at (1.73,1) {};
\node (42) at (1.73,1.5) {};
\node (43) at (2.17,1.75) {};
\node (44) at (2.6,1.5) {};
\node (51) at (3.03,1.75) {};
\node (52) at (3.46,1.5) {};
\node (61) at (1.3,1.75) {};
\node (62) at (1.3,2.25) {};
\node (63) at (1.73,2.5) {};
\node (64) at (2.17,2.25) {};
\draw[edge] (11)--(12)--(13)--(14)--(15)--(16)--(11);
\draw[edge] (14)--(21)--(22)--(23)--(24)--(15);
\draw[edge] (22)--(31)--(32)--(33)--(34)--(23);
\draw[edge] (12)--(41)--(42)--(43)--(44)--(13);
\draw[edge] (44)--(51)--(52)--(21);
\draw[edge] (42)--(61)--(62)--(63)--(64)--(43);
\end{tikzpicture}
}
\quad
\subfigure[$P_5(3, 4)$]{
\begin{tikzpicture}[scale=0.238,rotate=-60]
\tikzstyle{every node} = [inner sep=1.2, draw, circle,fill=gray!60]
\tikzstyle{edge} = [draw, line width=1.0]
\tikzstyle{periedge} = [draw, line width=1.0]
\node (0_0_0) at (0.000000, 1.400000) {};
\node (0_0_1) at (1.212436, 0.700000) {};
\node (1_-1_0) at (1.212436, -0.700000) {};
\node (0_-1_1) at (0.000000, -1.400000) {};
\node (0_-1_0) at (-1.212436, -0.700000) {};
\node (-1_0_1) at (-1.212436, 0.700000) {};
\node (0_1_0) at (1.212436, 3.500000) {};
\node (0_1_1) at (2.424871, 2.800000) {};
\node (1_0_0) at (2.424871, 1.400000) {};
\node (-1_1_1) at (0.000000, 2.800000) {};
\node (0_2_0) at (2.424871, 5.600000) {};
\node (0_2_1) at (3.637307, 4.900000) {};
\node (1_1_0) at (3.637307, 3.500000) {};
\node (-1_2_1) at (1.212436, 4.900000) {};
\node (0_3_0) at (3.637307, 7.700000) {};
\node (0_3_1) at (4.849742, 7.000000) {};
\node (1_2_0) at (4.849742, 5.600000) {};
\node (-1_3_1) at (2.424871, 7.000000) {};
\node (-1_0_0) at (-2.424871, 1.400000) {};
\node (-1_-1_1) at (-2.424871, -1.400000) {};
\node (-1_-1_0) at (-3.637307, -0.700000) {};
\node (-2_0_1) at (-3.637307, 0.700000) {};
\node (-1_1_0) at (-1.212436, 3.500000) {};
\node (-2_1_1) at (-2.424871, 2.800000) {};
\node (-1_2_0) at (0.000000, 5.600000) {};
\node (-2_2_1) at (-1.212436, 4.900000) {};
\node (-1_3_0) at (1.212436, 7.700000) {};
\node (-2_3_1) at (0.000000, 7.000000) {};
\node (-2_0_0) at (-4.849742, 1.400000) {};
\node (-2_-1_1) at (-4.849742, -1.400000) {};
\node (-2_-1_0) at (-6.062178, -0.700000) {};
\node (-3_0_1) at (-6.062178, 0.700000) {};
\node (-2_1_0) at (-3.637307, 3.500000) {};
\node (-3_1_1) at (-4.849742, 2.800000) {};
\node (-2_2_0) at (-2.424871, 5.600000) {};
\node (-3_2_1) at (-3.637307, 4.900000) {};
\node (-2_3_0) at (-1.212436, 7.700000) {};
\node (-3_3_1) at (-2.424871, 7.000000) {};
\node (-2_4_0) at (0.000000, 9.800000) {};
\node (-2_4_1) at (1.212436, 9.100000) {};
\node (-3_4_1) at (-1.212436, 9.100000) {};
\node (-3_1_0) at (-6.062178, 3.500000) {};
\node (-3_0_0) at (-7.274613, 1.400000) {};
\node (-4_1_1) at (-7.274613, 2.800000) {};
\node (-3_2_0) at (-4.849742, 5.600000) {};
\node (-4_2_1) at (-6.062178, 4.900000) {};
\node (-3_3_0) at (-3.637307, 7.700000) {};
\node (-4_3_1) at (-4.849742, 7.000000) {};
\node (-3_4_0) at (-2.424871, 9.800000) {};
\node (-4_4_1) at (-3.637307, 9.100000) {};
\node (-4_2_0) at (-7.274613, 5.600000) {};
\node (-4_1_0) at (-8.487049, 3.500000) {};
\node (-5_2_1) at (-8.487049, 4.900000) {};
\node (-4_3_0) at (-6.062178, 7.700000) {};
\node (-5_3_1) at (-7.274613, 7.000000) {};
\node (-4_4_0) at (-4.849742, 9.800000) {};
\node (-5_4_1) at (-6.062178, 9.100000) {};
\node (-4_5_0) at (-3.637307, 11.900000) {};
\node (-4_5_1) at (-2.424871, 11.200000) {};
\node (-5_5_1) at (-4.849742, 11.200000) {};
\draw[edge] (-1_0_1) -- (0_0_0);
\draw[edge] (0_0_0) -- (0_0_1);
\draw[periedge] (0_0_1) -- (1_-1_0);
\draw[periedge] (0_-1_1) -- (1_-1_0);
\draw[periedge] (0_-1_0) -- (0_-1_1);
\draw[edge] (-1_0_1) -- (0_-1_0);
\draw[edge] (-1_1_1) -- (0_1_0);
\draw[edge] (0_1_0) -- (0_1_1);
\draw[periedge] (0_1_1) -- (1_0_0);
\draw[periedge] (0_0_1) -- (1_0_0);
\draw[edge] (-1_1_1) -- (0_0_0);
\draw[edge] (-1_2_1) -- (0_2_0);
\draw[edge] (0_2_0) -- (0_2_1);
\draw[periedge] (0_2_1) -- (1_1_0);
\draw[periedge] (0_1_1) -- (1_1_0);
\draw[edge] (-1_2_1) -- (0_1_0);
\draw[periedge] (-1_3_1) -- (0_3_0);
\draw[periedge] (0_3_0) -- (0_3_1);
\draw[periedge] (0_3_1) -- (1_2_0);
\draw[periedge] (0_2_1) -- (1_2_0);
\draw[edge] (-1_3_1) -- (0_2_0);
\draw[edge] (-2_0_1) -- (-1_0_0);
\draw[edge] (-1_0_0) -- (-1_0_1);
\draw[periedge] (-1_-1_1) -- (0_-1_0);
\draw[periedge] (-1_-1_0) -- (-1_-1_1);
\draw[edge] (-2_0_1) -- (-1_-1_0);
\draw[edge] (-2_1_1) -- (-1_1_0);
\draw[edge] (-1_1_0) -- (-1_1_1);
\draw[edge] (-2_1_1) -- (-1_0_0);
\draw[edge] (-2_2_1) -- (-1_2_0);
\draw[edge] (-1_2_0) -- (-1_2_1);
\draw[edge] (-2_2_1) -- (-1_1_0);
\draw[edge] (-2_3_1) -- (-1_3_0);
\draw[periedge] (-1_3_0) -- (-1_3_1);
\draw[edge] (-2_3_1) -- (-1_2_0);
\draw[edge] (-3_0_1) -- (-2_0_0);
\draw[edge] (-2_0_0) -- (-2_0_1);
\draw[periedge] (-2_-1_1) -- (-1_-1_0);
\draw[periedge] (-2_-1_0) -- (-2_-1_1);
\draw[periedge] (-3_0_1) -- (-2_-1_0);
\draw[edge] (-3_1_1) -- (-2_1_0);
\draw[edge] (-2_1_0) -- (-2_1_1);
\draw[edge] (-3_1_1) -- (-2_0_0);
\draw[edge] (-3_2_1) -- (-2_2_0);
\draw[edge] (-2_2_0) -- (-2_2_1);
\draw[edge] (-3_2_1) -- (-2_1_0);
\draw[edge] (-3_3_1) -- (-2_3_0);
\draw[edge] (-2_3_0) -- (-2_3_1);
\draw[edge] (-3_3_1) -- (-2_2_0);
\draw[periedge] (-3_4_1) -- (-2_4_0);
\draw[periedge] (-2_4_0) -- (-2_4_1);
\draw[periedge] (-2_4_1) -- (-1_3_0);
\draw[edge] (-3_4_1) -- (-2_3_0);
\draw[edge] (-4_1_1) -- (-3_1_0);
\draw[edge] (-3_1_0) -- (-3_1_1);
\draw[periedge] (-3_0_0) -- (-3_0_1);
\draw[periedge] (-4_1_1) -- (-3_0_0);
\draw[edge] (-4_2_1) -- (-3_2_0);
\draw[edge] (-3_2_0) -- (-3_2_1);
\draw[edge] (-4_2_1) -- (-3_1_0);
\draw[edge] (-4_3_1) -- (-3_3_0);
\draw[edge] (-3_3_0) -- (-3_3_1);
\draw[edge] (-4_3_1) -- (-3_2_0);
\draw[edge] (-4_4_1) -- (-3_4_0);
\draw[periedge] (-3_4_0) -- (-3_4_1);
\draw[edge] (-4_4_1) -- (-3_3_0);
\draw[periedge] (-5_2_1) -- (-4_2_0);
\draw[edge] (-4_2_0) -- (-4_2_1);
\draw[periedge] (-4_1_0) -- (-4_1_1);
\draw[periedge] (-5_2_1) -- (-4_1_0);
\draw[periedge] (-5_3_1) -- (-4_3_0);
\draw[edge] (-4_3_0) -- (-4_3_1);
\draw[periedge] (-5_3_1) -- (-4_2_0);
\draw[periedge] (-5_4_1) -- (-4_4_0);
\draw[edge] (-4_4_0) -- (-4_4_1);
\draw[periedge] (-5_4_1) -- (-4_3_0);
\draw[periedge] (-5_5_1) -- (-4_5_0);
\draw[periedge] (-4_5_0) -- (-4_5_1);
\draw[periedge] (-4_5_1) -- (-3_4_0);
\draw[periedge] (-5_5_1) -- (-4_4_0);
\end{tikzpicture}
}
\subfigure[$B_3(3)$]{
\begin{tikzpicture}[scale=0.67,xscale=1,rotate=-60]
\tikzstyle{every node} = [inner sep=1.2, draw, circle,fill=gray!60]
\tikzstyle{edge} = [draw, line width=1.0]
\tikzstyle{periedge} = [draw, line width=1.0]
\node (11) at (2.17,0.25) {};
\node (12) at (2.17,0.75) {};
\node (13) at (2.6,1) {};
\node (14) at (3.03,0.75) {};
\node (15) at (3.03,0.25) {};
\node (16) at (2.6,0) {};
\node (21) at (3.46,1) {};
\node (22) at (3.9,0.75) {};
\node (23) at (3.9,0.25) {};
\node (24) at (3.46,0) {};
\node (31) at (4.33,1) {};
\node (32) at (4.76,0.75) {};
\node (33) at (4.76,0.25) {};
\node (34) at (4.33,0) {};
\node (41) at (1.73,1) {};
\node (42) at (1.73,1.5) {};
\node (43) at (2.17,1.75) {};
\node (44) at (2.6,1.5) {};
\node (51) at (3.03,1.75) {};
\node (52) at (3.46,1.5) {};
\node (61) at (1.3,1.75) {};
\node (62) at (1.3,2.25) {};
\node (63) at (1.73,2.5) {};
\node (64) at (2.17,2.25) {};
\node (71) at (3.9,1.75) {};
\node (72) at (4.33,1.5) {};
\node (81) at (2.6,2.5) {};
\node (82) at (3.03,2.25) {};
\node (91) at (0.87,2.5) {};
\node (92) at (0.87,3) {};
\node (93) at (1.3,3.25) {};
\node (94) at (1.73,3) {};
\draw[edge] (11)--(12)--(13)--(14)--(15)--(16)--(11);
\draw[edge] (14)--(21)--(22)--(23)--(24)--(15);
\draw[edge] (22)--(31)--(32)--(33)--(34)--(23);
\draw[edge] (12)--(41)--(42)--(43)--(44)--(13);
\draw[edge] (44)--(51)--(52)--(21);
\draw[edge] (42)--(61)--(62)--(63)--(64)--(43);
\draw[edge] (52)--(71)--(72)--(31);
\draw[edge] (64)--(81)--(82)--(51);
\draw[edge] (62)--(91)--(92)--(93)--(94)--(63);
\end{tikzpicture}
}
\quad
\subfigure[$O_3(3)$]{
\begin{tikzpicture}[scale=0.67,xscale=1,rotate=-60]
\tikzstyle{every node} = [inner sep=1.2, draw, circle,fill=gray!60]
\tikzstyle{edge} = [draw, line width=1.0]
\tikzstyle{periedge} = [draw, line width=1.0]
\node (11) at (2.17,0.25) {};
\node (12) at (2.17,0.75) {};
\node (13) at (2.6,1) {};
\node (14) at (3.03,0.75) {};
\node (15) at (3.03,0.25) {};
\node (16) at (2.6,0) {};
\node (21) at (3.46,1) {};
\node (22) at (3.9,0.75) {};
\node (23) at (3.9,0.25) {};
\node (24) at (3.46,0) {};
\node (31) at (4.33,1) {};
\node (32) at (4.76,0.75) {};
\node (33) at (4.76,0.25) {};
\node (34) at (4.33,0) {};
\node (41) at (1.73,1) {};
\node (42) at (1.73,1.5) {};
\node (43) at (2.17,1.75) {};
\node (44) at (2.6,1.5) {};
\node (51) at (3.03,1.75) {};
\node (52) at (3.46,1.5) {};
\node (61) at (1.3,1.75) {};
\node (62) at (1.3,2.25) {};
\node (63) at (1.73,2.5) {};
\node (64) at (2.17,2.25) {};
\node (71) at (3.9,1.75) {};
\node (72) at (4.33,1.5) {};
\node (81) at (2.6,2.5) {};
\node (82) at (3.03,2.25) {};
\draw[edge] (11)--(12)--(13)--(14)--(15)--(16)--(11);
\draw[edge] (14)--(21)--(22)--(23)--(24)--(15);
\draw[edge] (22)--(31)--(32)--(33)--(34)--(23);
\draw[edge] (12)--(41)--(42)--(43)--(44)--(13);
\draw[edge] (44)--(51)--(52)--(21);
\draw[edge] (42)--(61)--(62)--(63)--(64)--(43);
\draw[edge] (52)--(71)--(72)--(31);
\draw[edge] (64)--(81)--(82)--(51);
\end{tikzpicture}
}
\quad
\subfigure[$P_4(3, 4)$]{
\begin{tikzpicture}[scale=0.238,rotate=60]
\tikzstyle{every node} = [inner sep=1.2, draw, circle,fill=gray!60]
\tikzstyle{edge} = [draw, line width=1.0]
\tikzstyle{periedge} = [draw, line width=1.0]
\node (0_0_0) at (0.000000, 1.400000) {};
\node (0_0_1) at (1.212436, 0.700000) {};
\node (1_-1_0) at (1.212436, -0.700000) {};
\node (0_-1_1) at (0.000000, -1.400000) {};
\node (0_-1_0) at (-1.212436, -0.700000) {};
\node (-1_0_1) at (-1.212436, 0.700000) {};
\node (-1_-1_0) at (-3.637307, -0.700000) {};
\node (-1_-1_1) at (-2.424871, -1.400000) {};
\node (0_-2_0) at (-2.424871, -2.800000) {};
\node (-1_-2_1) at (-3.637307, -3.500000) {};
\node (-1_-2_0) at (-4.849742, -2.800000) {};
\node (-2_-1_1) at (-4.849742, -1.400000) {};
\node (-1_0_0) at (-2.424871, 1.400000) {};
\node (-2_0_1) at (-3.637307, 0.700000) {};
\node (-1_1_0) at (-1.212436, 3.500000) {};
\node (-1_1_1) at (0.000000, 2.800000) {};
\node (-2_1_1) at (-2.424871, 2.800000) {};
\node (-2_-2_0) at (-7.274613, -2.800000) {};
\node (-2_-2_1) at (-6.062178, -3.500000) {};
\node (-1_-3_0) at (-6.062178, -4.900000) {};
\node (-2_-3_1) at (-7.274613, -5.600000) {};
\node (-2_-3_0) at (-8.487049, -4.900000) {};
\node (-3_-2_1) at (-8.487049, -3.500000) {};
\node (-2_-1_0) at (-6.062178, -0.700000) {};
\node (-3_-1_1) at (-7.274613, -1.400000) {};
\node (-2_0_0) at (-4.849742, 1.400000) {};
\node (-3_0_1) at (-6.062178, 0.700000) {};
\node (-2_1_0) at (-3.637307, 3.500000) {};
\node (-3_1_1) at (-4.849742, 2.800000) {};
\node (-2_2_0) at (-2.424871, 5.600000) {};
\node (-2_2_1) at (-1.212436, 4.900000) {};
\node (-3_2_1) at (-3.637307, 4.900000) {};
\node (-3_-1_0) at (-8.487049, -0.700000) {};
\node (-3_-2_0) at (-9.699485, -2.800000) {};
\node (-4_-1_1) at (-9.699485, -1.400000) {};
\node (-3_0_0) at (-7.274613, 1.400000) {};
\node (-4_0_1) at (-8.487049, 0.700000) {};
\node (-3_1_0) at (-6.062178, 3.500000) {};
\node (-4_1_1) at (-7.274613, 2.800000) {};
\node (-3_2_0) at (-4.849742, 5.600000) {};
\node (-4_2_1) at (-6.062178, 4.900000) {};
\node (-3_3_0) at (-3.637307, 7.700000) {};
\node (-3_3_1) at (-2.424871, 7.000000) {};
\node (-4_3_1) at (-4.849742, 7.000000) {};
\node (-4_0_0) at (-9.699485, 1.400000) {};
\node (-4_-1_0) at (-10.911920, -0.700000) {};
\node (-5_0_1) at (-10.911920, 0.700000) {};
\node (-4_1_0) at (-8.487049, 3.500000) {};
\node (-5_1_1) at (-9.699485, 2.800000) {};
\node (-4_2_0) at (-7.274613, 5.600000) {};
\node (-5_2_1) at (-8.487049, 4.900000) {};
\node (-5_1_0) at (-10.911920, 3.500000) {};
\node (-5_0_0) at (-12.124356, 1.400000) {};
\node (-6_1_1) at (-12.124356, 2.800000) {};
\draw[edge] (-1_0_1) -- (0_0_0);
\draw[periedge] (0_0_0) -- (0_0_1);
\draw[periedge] (0_0_1) -- (1_-1_0);
\draw[periedge] (0_-1_1) -- (1_-1_0);
\draw[periedge] (0_-1_0) -- (0_-1_1);
\draw[edge] (-1_0_1) -- (0_-1_0);
\draw[edge] (-2_-1_1) -- (-1_-1_0);
\draw[edge] (-1_-1_0) -- (-1_-1_1);
\draw[periedge] (-1_-1_1) -- (0_-2_0);
\draw[periedge] (-1_-2_1) -- (0_-2_0);
\draw[periedge] (-1_-2_0) -- (-1_-2_1);
\draw[edge] (-2_-1_1) -- (-1_-2_0);
\draw[edge] (-2_0_1) -- (-1_0_0);
\draw[edge] (-1_0_0) -- (-1_0_1);
\draw[periedge] (-1_-1_1) -- (0_-1_0);
\draw[edge] (-2_0_1) -- (-1_-1_0);
\draw[edge] (-2_1_1) -- (-1_1_0);
\draw[periedge] (-1_1_0) -- (-1_1_1);
\draw[periedge] (-1_1_1) -- (0_0_0);
\draw[edge] (-2_1_1) -- (-1_0_0);
\draw[edge] (-3_-2_1) -- (-2_-2_0);
\draw[edge] (-2_-2_0) -- (-2_-2_1);
\draw[periedge] (-2_-2_1) -- (-1_-3_0);
\draw[periedge] (-2_-3_1) -- (-1_-3_0);
\draw[periedge] (-2_-3_0) -- (-2_-3_1);
\draw[periedge] (-3_-2_1) -- (-2_-3_0);
\draw[edge] (-3_-1_1) -- (-2_-1_0);
\draw[edge] (-2_-1_0) -- (-2_-1_1);
\draw[periedge] (-2_-2_1) -- (-1_-2_0);
\draw[edge] (-3_-1_1) -- (-2_-2_0);
\draw[edge] (-3_0_1) -- (-2_0_0);
\draw[edge] (-2_0_0) -- (-2_0_1);
\draw[edge] (-3_0_1) -- (-2_-1_0);
\draw[edge] (-3_1_1) -- (-2_1_0);
\draw[edge] (-2_1_0) -- (-2_1_1);
\draw[edge] (-3_1_1) -- (-2_0_0);
\draw[edge] (-3_2_1) -- (-2_2_0);
\draw[periedge] (-2_2_0) -- (-2_2_1);
\draw[periedge] (-2_2_1) -- (-1_1_0);
\draw[edge] (-3_2_1) -- (-2_1_0);
\draw[edge] (-4_-1_1) -- (-3_-1_0);
\draw[edge] (-3_-1_0) -- (-3_-1_1);
\draw[periedge] (-3_-2_0) -- (-3_-2_1);
\draw[periedge] (-4_-1_1) -- (-3_-2_0);
\draw[edge] (-4_0_1) -- (-3_0_0);
\draw[edge] (-3_0_0) -- (-3_0_1);
\draw[edge] (-4_0_1) -- (-3_-1_0);
\draw[edge] (-4_1_1) -- (-3_1_0);
\draw[edge] (-3_1_0) -- (-3_1_1);
\draw[edge] (-4_1_1) -- (-3_0_0);
\draw[periedge] (-4_2_1) -- (-3_2_0);
\draw[edge] (-3_2_0) -- (-3_2_1);
\draw[edge] (-4_2_1) -- (-3_1_0);
\draw[periedge] (-4_3_1) -- (-3_3_0);
\draw[periedge] (-3_3_0) -- (-3_3_1);
\draw[periedge] (-3_3_1) -- (-2_2_0);
\draw[periedge] (-4_3_1) -- (-3_2_0);
\draw[edge] (-5_0_1) -- (-4_0_0);
\draw[edge] (-4_0_0) -- (-4_0_1);
\draw[periedge] (-4_-1_0) -- (-4_-1_1);
\draw[periedge] (-5_0_1) -- (-4_-1_0);
\draw[periedge] (-5_1_1) -- (-4_1_0);
\draw[edge] (-4_1_0) -- (-4_1_1);
\draw[edge] (-5_1_1) -- (-4_0_0);
\draw[periedge] (-5_2_1) -- (-4_2_0);
\draw[periedge] (-4_2_0) -- (-4_2_1);
\draw[periedge] (-5_2_1) -- (-4_1_0);
\draw[periedge] (-6_1_1) -- (-5_1_0);
\draw[periedge] (-5_1_0) -- (-5_1_1);
\draw[periedge] (-5_0_0) -- (-5_0_1);
\draw[periedge] (-6_1_1) -- (-5_0_0);
\end{tikzpicture}
}
\quad
\subfigure[$S(3)$]{
\begin{tikzpicture}[scale=0.238,rotate=60]
\tikzstyle{every node} = [inner sep=1.2, draw, circle,fill=gray!60]
\tikzstyle{edge} = [draw, line width=1.0]
\tikzstyle{periedge} = [draw, line width=1.0]
\node (0_0_0) at (0.000000, 1.400000) {};
\node (0_0_1) at (1.212436, 0.700000) {};
\node (1_-1_0) at (1.212436, -0.700000) {};
\node (0_-1_1) at (0.000000, -1.400000) {};
\node (0_-1_0) at (-1.212436, -0.700000) {};
\node (-1_0_1) at (-1.212436, 0.700000) {};
\node (-1_-1_0) at (-3.637307, -0.700000) {};
\node (-1_-1_1) at (-2.424871, -1.400000) {};
\node (0_-2_0) at (-2.424871, -2.800000) {};
\node (-1_-2_1) at (-3.637307, -3.500000) {};
\node (-1_-2_0) at (-4.849742, -2.800000) {};
\node (-2_-1_1) at (-4.849742, -1.400000) {};
\node (-1_0_0) at (-2.424871, 1.400000) {};
\node (-2_0_1) at (-3.637307, 0.700000) {};
\node (-2_-2_0) at (-7.274613, -2.800000) {};
\node (-2_-2_1) at (-6.062178, -3.500000) {};
\node (-1_-3_0) at (-6.062178, -4.900000) {};
\node (-2_-3_1) at (-7.274613, -5.600000) {};
\node (-2_-3_0) at (-8.487049, -4.900000) {};
\node (-3_-2_1) at (-8.487049, -3.500000) {};
\node (-2_-1_0) at (-6.062178, -0.700000) {};
\node (-3_-1_1) at (-7.274613, -1.400000) {};
\node (-2_0_0) at (-4.849742, 1.400000) {};
\node (-3_0_1) at (-6.062178, 0.700000) {};
\node (-2_1_0) at (-3.637307, 3.500000) {};
\node (-2_1_1) at (-2.424871, 2.800000) {};
\node (-3_1_1) at (-4.849742, 2.800000) {};
\node (-3_-3_0) at (-10.911920, -4.900000) {};
\node (-3_-3_1) at (-9.699485, -5.600000) {};
\node (-2_-4_0) at (-9.699485, -7.000000) {};
\node (-3_-4_1) at (-10.911920, -7.700000) {};
\node (-3_-4_0) at (-12.124356, -7.000000) {};
\node (-4_-3_1) at (-12.124356, -5.600000) {};
\node (-3_-2_0) at (-9.699485, -2.800000) {};
\node (-4_-2_1) at (-10.911920, -3.500000) {};
\node (-3_-1_0) at (-8.487049, -0.700000) {};
\node (-4_-1_1) at (-9.699485, -1.400000) {};
\node (-3_0_0) at (-7.274613, 1.400000) {};
\node (-4_0_1) at (-8.487049, 0.700000) {};
\node (-3_1_0) at (-6.062178, 3.500000) {};
\node (-4_1_1) at (-7.274613, 2.800000) {};
\node (-3_2_0) at (-4.849742, 5.600000) {};
\node (-3_2_1) at (-3.637307, 4.900000) {};
\node (-4_2_1) at (-6.062178, 4.900000) {};
\node (-3_3_0) at (-3.637307, 7.700000) {};
\node (-3_3_1) at (-2.424871, 7.000000) {};
\node (-2_2_0) at (-2.424871, 5.600000) {};
\node (-4_3_1) at (-4.849742, 7.000000) {};
\node (-4_-1_0) at (-10.911920, -0.700000) {};
\node (-4_-2_0) at (-12.124356, -2.800000) {};
\node (-5_-1_1) at (-12.124356, -1.400000) {};
\node (-4_0_0) at (-9.699485, 1.400000) {};
\node (-5_0_1) at (-10.911920, 0.700000) {};
\node (-4_1_0) at (-8.487049, 3.500000) {};
\node (-5_1_1) at (-9.699485, 2.800000) {};
\node (-4_2_0) at (-7.274613, 5.600000) {};
\node (-5_2_1) at (-8.487049, 4.900000) {};
\node (-5_0_0) at (-12.124356, 1.400000) {};
\node (-5_-1_0) at (-13.336791, -0.700000) {};
\node (-6_0_1) at (-13.336791, 0.700000) {};
\node (-5_1_0) at (-10.911920, 3.500000) {};
\node (-6_1_1) at (-12.124356, 2.800000) {};
\node (-6_0_0) at (-14.549227, 1.400000) {};
\node (-6_-1_1) at (-14.549227, -1.400000) {};
\node (-6_-1_0) at (-15.761662, -0.700000) {};
\node (-7_0_1) at (-15.761662, 0.700000) {};
\draw[periedge] (-1_0_1) -- (0_0_0);
\draw[periedge] (0_0_0) -- (0_0_1);
\draw[periedge] (0_0_1) -- (1_-1_0);
\draw[periedge] (0_-1_1) -- (1_-1_0);
\draw[periedge] (0_-1_0) -- (0_-1_1);
\draw[edge] (-1_0_1) -- (0_-1_0);
\draw[edge] (-2_-1_1) -- (-1_-1_0);
\draw[edge] (-1_-1_0) -- (-1_-1_1);
\draw[periedge] (-1_-1_1) -- (0_-2_0);
\draw[periedge] (-1_-2_1) -- (0_-2_0);
\draw[periedge] (-1_-2_0) -- (-1_-2_1);
\draw[edge] (-2_-1_1) -- (-1_-2_0);
\draw[edge] (-2_0_1) -- (-1_0_0);
\draw[periedge] (-1_0_0) -- (-1_0_1);
\draw[periedge] (-1_-1_1) -- (0_-1_0);
\draw[edge] (-2_0_1) -- (-1_-1_0);
\draw[edge] (-3_-2_1) -- (-2_-2_0);
\draw[edge] (-2_-2_0) -- (-2_-2_1);
\draw[periedge] (-2_-2_1) -- (-1_-3_0);
\draw[periedge] (-2_-3_1) -- (-1_-3_0);
\draw[periedge] (-2_-3_0) -- (-2_-3_1);
\draw[edge] (-3_-2_1) -- (-2_-3_0);
\draw[edge] (-3_-1_1) -- (-2_-1_0);
\draw[edge] (-2_-1_0) -- (-2_-1_1);
\draw[periedge] (-2_-2_1) -- (-1_-2_0);
\draw[edge] (-3_-1_1) -- (-2_-2_0);
\draw[edge] (-3_0_1) -- (-2_0_0);
\draw[edge] (-2_0_0) -- (-2_0_1);
\draw[edge] (-3_0_1) -- (-2_-1_0);
\draw[edge] (-3_1_1) -- (-2_1_0);
\draw[periedge] (-2_1_0) -- (-2_1_1);
\draw[periedge] (-2_1_1) -- (-1_0_0);
\draw[edge] (-3_1_1) -- (-2_0_0);
\draw[periedge] (-4_-3_1) -- (-3_-3_0);
\draw[edge] (-3_-3_0) -- (-3_-3_1);
\draw[periedge] (-3_-3_1) -- (-2_-4_0);
\draw[periedge] (-3_-4_1) -- (-2_-4_0);
\draw[periedge] (-3_-4_0) -- (-3_-4_1);
\draw[periedge] (-4_-3_1) -- (-3_-4_0);
\draw[edge] (-4_-2_1) -- (-3_-2_0);
\draw[edge] (-3_-2_0) -- (-3_-2_1);
\draw[periedge] (-3_-3_1) -- (-2_-3_0);
\draw[periedge] (-4_-2_1) -- (-3_-3_0);
\draw[edge] (-4_-1_1) -- (-3_-1_0);
\draw[edge] (-3_-1_0) -- (-3_-1_1);
\draw[edge] (-4_-1_1) -- (-3_-2_0);
\draw[edge] (-4_0_1) -- (-3_0_0);
\draw[edge] (-3_0_0) -- (-3_0_1);
\draw[edge] (-4_0_1) -- (-3_-1_0);
\draw[edge] (-4_1_1) -- (-3_1_0);
\draw[edge] (-3_1_0) -- (-3_1_1);
\draw[edge] (-4_1_1) -- (-3_0_0);
\draw[periedge] (-4_2_1) -- (-3_2_0);
\draw[edge] (-3_2_0) -- (-3_2_1);
\draw[periedge] (-3_2_1) -- (-2_1_0);
\draw[edge] (-4_2_1) -- (-3_1_0);
\draw[periedge] (-4_3_1) -- (-3_3_0);
\draw[periedge] (-3_3_0) -- (-3_3_1);
\draw[periedge] (-3_3_1) -- (-2_2_0);
\draw[periedge] (-3_2_1) -- (-2_2_0);
\draw[periedge] (-4_3_1) -- (-3_2_0);
\draw[edge] (-5_-1_1) -- (-4_-1_0);
\draw[edge] (-4_-1_0) -- (-4_-1_1);
\draw[periedge] (-4_-2_0) -- (-4_-2_1);
\draw[periedge] (-5_-1_1) -- (-4_-2_0);
\draw[edge] (-5_0_1) -- (-4_0_0);
\draw[edge] (-4_0_0) -- (-4_0_1);
\draw[edge] (-5_0_1) -- (-4_-1_0);
\draw[periedge] (-5_1_1) -- (-4_1_0);
\draw[edge] (-4_1_0) -- (-4_1_1);
\draw[edge] (-5_1_1) -- (-4_0_0);
\draw[periedge] (-5_2_1) -- (-4_2_0);
\draw[periedge] (-4_2_0) -- (-4_2_1);
\draw[periedge] (-5_2_1) -- (-4_1_0);
\draw[periedge] (-6_0_1) -- (-5_0_0);
\draw[edge] (-5_0_0) -- (-5_0_1);
\draw[periedge] (-5_-1_0) -- (-5_-1_1);
\draw[edge] (-6_0_1) -- (-5_-1_0);
\draw[periedge] (-6_1_1) -- (-5_1_0);
\draw[periedge] (-5_1_0) -- (-5_1_1);
\draw[periedge] (-6_1_1) -- (-5_0_0);
\draw[periedge] (-7_0_1) -- (-6_0_0);
\draw[periedge] (-6_0_0) -- (-6_0_1);
\draw[periedge] (-6_-1_1) -- (-5_-1_0);
\draw[periedge] (-6_-1_0) -- (-6_-1_1);
\draw[periedge] (-7_0_1) -- (-6_-1_0);
\end{tikzpicture}
}
\quad
\subfigure[$T(3)$]{
\begin{tikzpicture}[scale=0.67,xscale=1,rotate=60]
\tikzstyle{every node} = [inner sep=1.2, draw, circle,fill=gray!60]
\tikzstyle{edge} = [draw, line width=1.0]
\tikzstyle{periedge} = [draw, line width=1.0]
\node (01) at (3.03,2.75) {};
\node (02) at (3.46,2.5) {};
\node (03) at (3.9,2.75) {};
\node (04) at (3.9,3.25) {};
\node (05) at (3.46,3.5) {};
\node (06) at (3.03,3.25) {};
\node (11) at (2.6,2) {};
\node (12) at (3.03,1.75) {};
\node (13) at (3.46,2) {};
\node (21) at (3.9,1.75) {};
\node (22) at (4.3,2) {};
\node (23) at (4.3,2.5) {};
\node (31) at (4.76,2.75) {};
\node (32) at (4.76,3.25) {};
\node (33) at (4.3,3.5) {};
\node (41) at (4.3,4) {};
\node (42) at (3.9,4.25) {};
\node (43) at (3.46,4) {};
\node (51) at (3.03,4.25) {};
\node (52) at (2.6,4) {};
\node (53) at (2.6,3.5) {};
\node (61) at (2.17,3.25) {};
\node (62) at (2.17,2.75) {};
\node (63) at (2.6,2.5) {};
\node (a1) at (3.03,1.25) {};
\node (a2) at (3.46,1) {};
\node (a3) at (3.9,1.25) {};
\node (b1) at (4.76,1.75) {};
\node (b2) at (5.2,2) {};
\node (b3) at (5.2,2.5) {};
\node (c1) at (5.2,3.5) {};
\node (c2) at (5.2,4) {};
\node (c3) at (4.76,4.25) {};
\node (d1) at (3.9,4.75) {};
\node (d2) at (3.46,5) {};
\node (d3) at (3.03,4.75) {};
\node (e1) at (2.17,4.25) {};
\node (e2) at (1.73,4) {};
\node (e3) at (1.73,3.5) {};
\node (f1) at (1.73,2.5) {};
\node (f2) at (1.73,2) {};
\node (f3) at (2.17,1.75) {};
\draw[edge] (01)--(02)--(03)--(04)--(05)--(06)--(01);
\draw[edge] (01)--(63)--(11)--(12)--(13)--(02);
\draw[edge] (02)--(13)--(21)--(22)--(23)--(03);
\draw[edge] (03)--(23)--(31)--(32)--(33)--(04);
\draw[edge] (04)--(33)--(41)--(42)--(43)--(05);
\draw[edge] (05)--(43)--(51)--(52)--(53)--(06);
\draw[edge] (06)--(53)--(61)--(62)--(63)--(01);
\draw[edge] (12)--(a1)--(a2)--(a3)--(21);
\draw[edge] (22)--(b1)--(b2)--(b3)--(31);
\draw[edge] (32)--(c1)--(c2)--(c3)--(41);
\draw[edge] (42)--(d1)--(d2)--(d3)--(51);
\draw[edge] (52)--(e1)--(e2)--(e3)--(61);
\draw[edge] (62)--(f1)--(f2)--(f3)--(11);
\end{tikzpicture}
}
\caption{Examples of the families defined in Table \ref{table:family}.}
\label{fig:examples}
\end{figure}
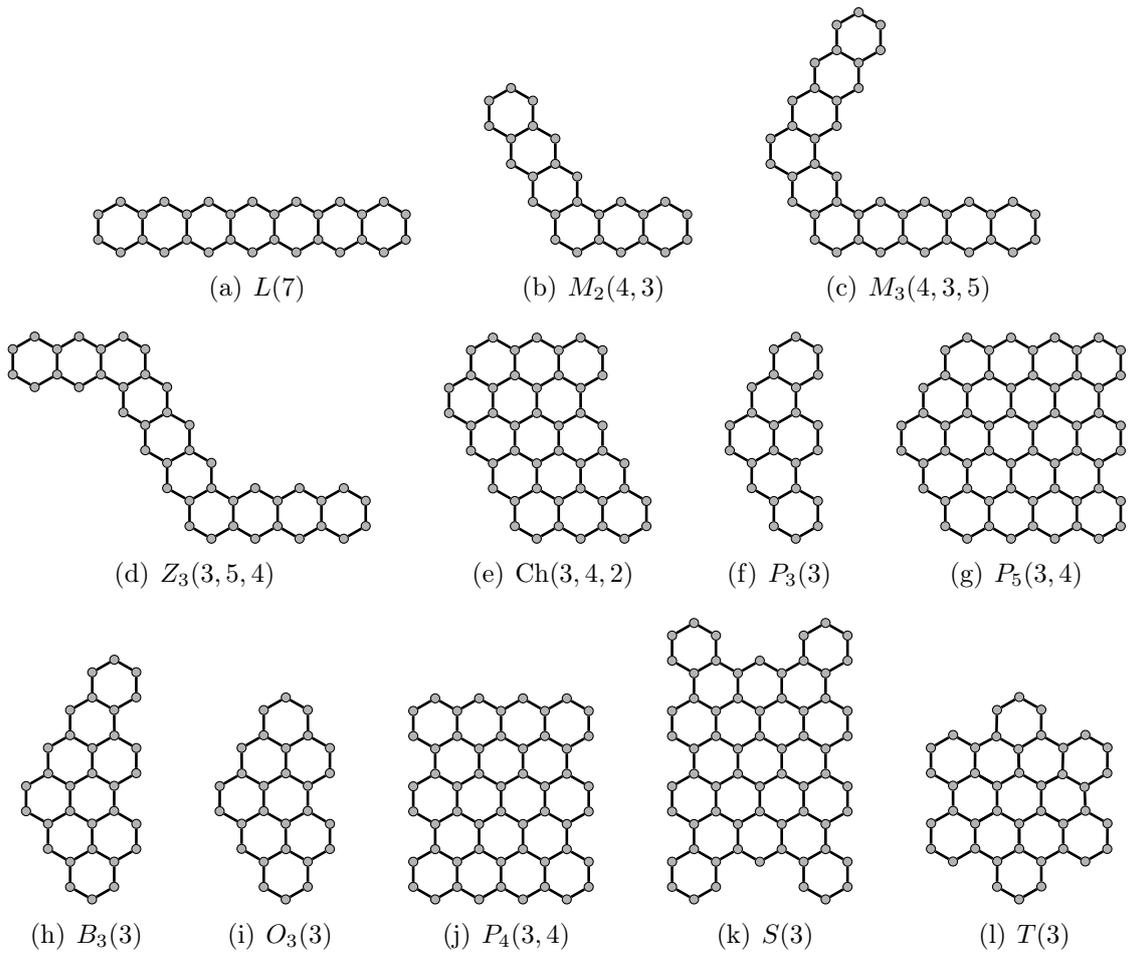

\section{Extremal convexity deficit} 

Clearly benzenoids with a small number of hexagons cannot have large
convexity deficit. For instance, benzenoids with up to $5$ hexagons have
convexity deficit at most $3$. 

\begin{definition}
  Let $B$ be a benzenoid. Let $\hex(B)$ denote the number of hexagons of
  $B$.  Let $\mcd(h)$ denote the maximum convexity deficit among all
  benzenoids on $h$ hexagons, i.e.
  \begin{equation*}
    \mcd(h) = \max \left\{ \cd(B) \mid \hex(B) = h \right\}.
  \end{equation*}
  Call each benzenoid attaining $\cd(h)$ \emph{extremal}, and $\ex(h)$ the
  number of extremal benzenoids with $h$ hexagons.
\end{definition}

We performed a computer search to find extremal benzenoids among all
benzenoids with $h$ hexagons for all $h \leq 18$.  The results are
summarised in Table~\ref{table:computerResults}. In particular, we noticed
that only one extremal benzenoid is pericondensed. It has $h = 6$ hexagons
and can be described by the boundary-edges code $533244111$ and is depicted
in Figure~\ref{fig:smallSpecialA}. Moreover, all other extremal benzenoids
with $h \leq 6$ are unbranched. There is a unique smallest branched
extremal benzenoid having $h = 7$. It has boundary-edges code
$523315151112$ and is depicted in Figure~\ref{fig:smallSpecialB}. Note that
the spiral benzenoid $S(h)$ attains the maximum value of convexity deficit
among all unbranched catacondensed benzenoids.  For $h \geq 14$, it appears
that all extremal benzenoids are branched.

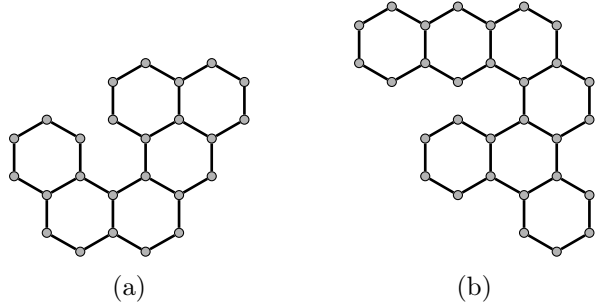
\begin{figure}[!htbp]
\centering
\subfigure[]{
\label{fig:smallSpecialA}
\begin{tikzpicture}[scale=1,xscale=1]
\tikzstyle{every node} = [inner sep=1.2, draw, circle,fill=gray!60]
\tikzstyle{edge} = [draw, line width=1.0]
\tikzstyle{periedge} = [draw, line width=1.0]
\node (h11) at (0.87,1) {};
\node (h12) at (0.43,0.75) {};
\node (h13) at (0,1) {};
\node (h14) at (0,1.5) {};
\node (h15) at (0.43,1.75) {};
\node (h16) at (0.87,1.5) {};
\node (h21) at (0.43,0.25) {};
\node (h22) at (0.87,0) {};
\node (h23) at (1.3,0.25) {};
\node (h24) at (1.3,0.75) {};
\node (h31) at (1.73,1) {};
\node (h32) at (2.17,0.75) {};
\node (h33) at (2.17,0.25) {};
\node (h34) at (1.73,0) {};
\node (h41) at (1.73,1.5) {};
\node (h42) at (2.17,1.75) {};
\node (h43) at (2.60,1.5) {};
\node (h44) at (2.60,1) {};
\node (h51) at (1.3,1.75) {};
\node (h52) at (1.3,2.25) {};
\node (h53) at (1.73,2.5) {};
\node (h54) at (2.17,2.25) {};
\node (h61) at (2.60,2.5) {};
\node (h62) at (3.03,2.25) {};
\node (h63) at (3.03,1.75) {};
\draw[edge] (h11)--(h12)--(h13)--(h14)--(h15)--(h16)--(h11);
\draw[edge] (h12)--(h21)--(h22)--(h23)--(h24)--(h11);
\draw[edge] (h24)--(h31)--(h32)--(h33)--(h34)--(h23);
\draw[edge] (h31)--(h41)--(h42)--(h43)--(h44)--(h32);
\draw[edge] (h41)--(h51)--(h52)--(h53)--(h54)--(h42);
\draw[edge] (h54)--(h61)--(h62)--(h63)--(h43);
\end{tikzpicture}
}
\qquad
\subfigure[]{
\label{fig:smallSpecialB}
\begin{tikzpicture}[scale=1,xscale=1]
\tikzstyle{every node} = [inner sep=1.2, draw, circle,fill=gray!60]
\tikzstyle{edge} = [draw, line width=1.0]
\tikzstyle{periedge} = [draw, line width=1.0]
\node (h11) at (2.6,0) {};
\node (h12) at (3.03,0.25) {};
\node (h13) at (3.03,0.75) {};
\node (h14) at (2.6,1) {};
\node (h15) at (2.17,0.75) {};
\node (h16) at (2.17,0.25) {};
\node (h21) at (2.6,1.5) {};
\node (h22) at (2.17,1.75) {};
\node (h23) at (1.73,1.5) {};
\node (h24) at (1.73,1) {};
\node (h31) at (1.3,1.75) {};
\node (h32) at (0.87,1.5) {};
\node (h33) at (0.87,1) {};
\node (h34) at (1.3,0.75) {};
\node (h41) at (2.17,2.25) {};
\node (h42) at (2.6,2.5) {};
\node (h43) at (3.03,2.25) {};
\node (h44) at (3.03,1.75) {};
\node (h51) at (2.6,3) {};
\node (h52) at (2.17,3.25) {};
\node (h53) at (1.73,3) {};
\node (h54) at (1.73,2.5) {};
\node (h61) at (1.3,3.25) {};
\node (h62) at (0.87,3) {};
\node (h63) at (0.87,2.5) {};
\node (h64) at (1.3,2.25) {};
\node (h71) at (0.43,3.25) {};
\node (h72) at (0,3) {};
\node (h73) at (0,2.5) {};
\node (h74) at (0.43,2.25) {};
\draw[edge] (h11)--(h12)--(h13)--(h14)--(h15)--(h16)--(h11);
\draw[edge] (h14)--(h21)--(h22)--(h23)--(h24)--(h15);
\draw[edge] (h24)--(h34)--(h33)--(h32)--(h31)--(h23);
\draw[edge] (h22)--(h41)--(h42)--(h43)--(h44)--(h21);
\draw[edge] (h42)--(h51)--(h52)--(h53)--(h54)--(h41);
\draw[edge] (h53)--(h61)--(h62)--(h63)--(h64)--(h54);
\draw[edge] (h62)--(h71)--(h72)--(h73)--(h74)--(h63);
\end{tikzpicture}
}
\caption{The only known pericondensed extremal benzenoid (a) and the
  smallest branched extremal benzenoid (b).}
\label{fig:smallSpecial}
\end{figure}

\begin{table}[!hb]
\centering
\caption{Maximal convexity deficit $\mcd(h)$ for each $2 \leq h \leq 18$
  and the number of extremal benzenoids $\ex(h)$. The last column contains
  an example of such a benzenoid.}
 \label{table:computerResults}
 \bigskip
 \begin{tabular}{| r r r l |}
  \hline
  $\bm{h}$  & $\bm{\mcd(h)}$ & $\bm{\ex(h)}$ & \textbf{An example (boundary-edges code)} \\
  \hline\hline
  2& 0  & 1  & 55 \\
  3 & 1  & 1  & 5351 \\
  4 & 2  & 2  & 532521 \\
  5 & 3  & 6  & 52325212\\
  6 & 4 & 16  & 5232252212\\
  7 & 6 & 3  & 523315151112\\
  8 & 8 & 2  & 53323325211211\\
  9 & 10 & 3   & 5332332252211211\\
  10 & 12 & 6 & 533233222522211211 \\
  11 & 14 & 16  & 52311121225223233312 \\
  12 & 16 & 37 & 5332332222252222211211 \\
  13 & 18 & 102  & 533233222222522222211211 \\
  14 & 21 & 2  & 53332322215133511122212111 \\
  15 & 23 & 12  & 5332332132151335111213211211 \\
  16 & 25 & 42  & 533323222321513351112122212111 \\
17 & 27 & 149  & 53323321323215133511121213211211 \\
18 & 29 & 489  & 5333232223232151335111212122212111 \\
  \hline
 \end{tabular}
\end{table}

We were able to find an interesting family of benzenoids, one member for
each number of hexagons. We call them \emph{spiral benzenoids}.

\begin{definition}
  Let $S(h)$ denote the spiral benzenoid on $h \geq 2$ hexagons determined by the
  following procedure.
  Let $a$ and $b$ denote the following infinite codes:
  \begin{align*}
    a & = 33323232322322322322232223 \ldots = \bigoplus\limits_{k=0}^\infty\, (2^k3)^3 \\
    b & = 11121212122122122122212221 \ldots = \bigoplus\limits_{k=0}^\infty\, (2^k1)^3,
  \end{align*}
  using $\bigoplus$ for repeated concatenation using the $\oplus$
  operation.  Let $a(\ell)$ denote the substring composed of the first
  $\ell$ symbols of $a$ and similarly define $b(\ell)$.  Let
  \begin{equation*}
    w(\ell) = 5a(\ell)5\rho b(\ell),
  \end{equation*}
  where $\rho b(\ell)$ denotes the reversal of $b(\ell)$.  The \emph{spiral
    benzenoid}, denoted $S(h)$, has $h$ hexagons and is defined by the
  boundary-edges code $w(h-2)$.
\end{definition}

For an example of $S(h)$, see Figure~\ref{fig:spiral}.
\begin{example}
For small values of $\ell$ we obtain:
\begin{align*}
S(2) = w(0) & = 55, \\
S(3) = w(1) & = 5352, \\
S(4) = w(2) & = 533511, \\
S(5) = w(3) & = 53335111, \\
S(6) = w(4) & = 5333252111,\text{ etc.}  
\end{align*}
All $S(2), \ldots, S(13)$ are extremal.
\end{example}

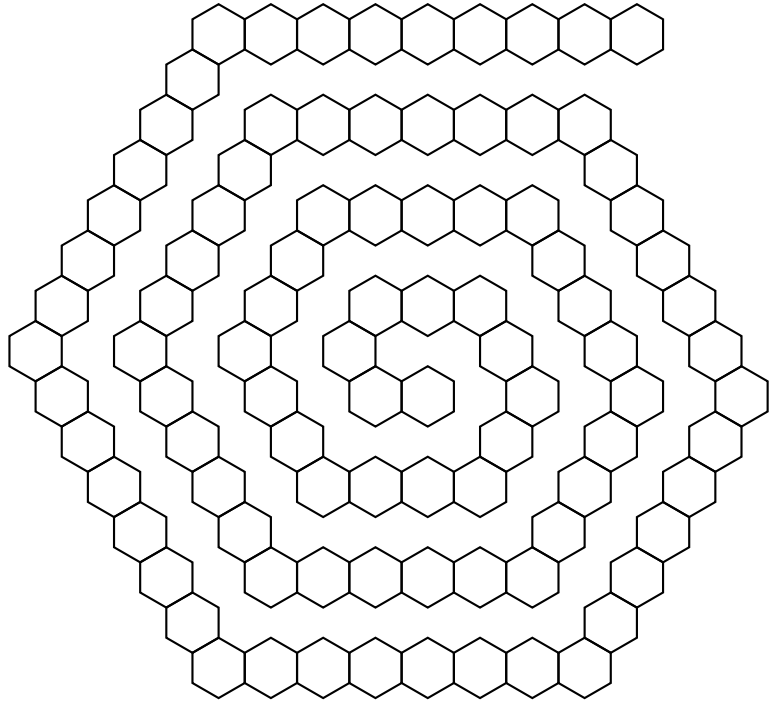
\begin{figure}[!htbp]
\centering
\begin{tikzpicture}[style=thick,scale=0.8]
\draw (8.17,4.75)--(8.6,5)--(8.6,5.5)--(8.17,5.75)--(7.74,5.5)--(7.74,5)--cycle;
\draw (7.31,4.75)--(7.74,5)--(7.74,5.5)--(7.31,5.75)--(6.88,5.5)--(6.88,5)--cycle;
\draw (6.88,5.5)--(7.31,5.75)--(7.31,6.25)--(6.88,6.5)--(6.45,6.25)--(6.45,5.75)--cycle;
\draw (7.31,6.25)--(7.74,6.5)--(7.74,7)--(7.31,7.25)--(6.88,7)--(6.88,6.5)--cycle;
\draw (8.17,6.25)--(8.6,6.5)--(8.6,7)--(8.17,7.25)--(7.74,7)--(7.74,6.5)--cycle;
\draw (9.03,6.25)--(9.46,6.5)--(9.46,7)--(9.03,7.25)--(8.6,7)--(8.6,6.5)--cycle;
\draw (9.46,5.5)--(9.89,5.75)--(9.89,6.25)--(9.46,6.5)--(9.03,6.25)--(9.03,5.75)--cycle;
\draw (9.89,4.75)--(10.32,5)--(10.32,5.5)--(9.89,5.75)--(9.46,5.5)--(9.46,5)--cycle;
\draw (9.46,4)--(9.89,4.25)--(9.89,4.75)--(9.46,5)--(9.03,4.75)--(9.03,4.25)--cycle;
\draw (9.03,3.25)--(9.46,3.5)--(9.46,4)--(9.03,4.25)--(8.6,4)--(8.6,3.5)--cycle;
\draw (8.17,3.25)--(8.6,3.5)--(8.6,4)--(8.17,4.25)--(7.74,4)--(7.74,3.5)--cycle;
\draw (7.31,3.25)--(7.74,3.5)--(7.74,4)--(7.31,4.25)--(6.88,4)--(6.88,3.5)--cycle;
\draw (6.45,3.25)--(6.88,3.5)--(6.88,4)--(6.45,4.25)--(6.02,4)--(6.02,3.5)--cycle;
\draw (6.02,4)--(6.45,4.25)--(6.45,4.75)--(6.02,5)--(5.59,4.75)--(5.59,4.25)--cycle;
\draw (5.59,4.75)--(6.02,5)--(6.02,5.5)--(5.59,5.75)--(5.16,5.5)--(5.16,5)--cycle;
\draw (5.16,5.5)--(5.59,5.75)--(5.59,6.25)--(5.16,6.5)--(4.73,6.25)--(4.73,5.75)--cycle;
\draw (5.59,6.25)--(6.02,6.5)--(6.02,7)--(5.59,7.25)--(5.16,7)--(5.16,6.5)--cycle;
\draw (6.02,7)--(6.45,7.25)--(6.45,7.75)--(6.02,8)--(5.59,7.75)--(5.59,7.25)--cycle;
\draw (6.45,7.75)--(6.88,8)--(6.88,8.5)--(6.45,8.75)--(6.02,8.5)--(6.02,8)--cycle;
\draw (7.31,7.75)--(7.74,8)--(7.74,8.5)--(7.31,8.75)--(6.88,8.5)--(6.88,8)--cycle;
\draw (8.17,7.75)--(8.60,8)--(8.60,8.5)--(8.17,8.75)--(7.74,8.5)--(7.74,8)--cycle;
\draw (9.03,7.75)--(9.46,8)--(9.46,8.5)--(9.03,8.75)--(8.60,8.5)--(8.60,8)--cycle;
\draw (9.89,7.75)--(10.32,8)--(10.32,8.5)--(9.89,8.75)--(9.46,8.5)--(9.46,8)--cycle;
\draw (10.32,7)--(10.75,7.25)--(10.75,7.75)--(10.32,8)--(9.89,7.75)--(9.89,7.25)--cycle;
\draw (10.75,6.25)--(11.18,6.5)--(11.18,7)--(10.75,7.25)--(10.32,7)--(10.32,6.5)--cycle;
\draw (11.18,5.5)--(11.61,5.75)--(11.61,6.25)--(11.18,6.5)--(10.75,6.25)--(10.75,5.75)--cycle;
\draw (11.61,4.75)--(12.04,5)--(12.04,5.5)--(11.61,5.75)--(11.18,5.5)--(11.18,5)--cycle;
\draw (11.18,4)--(11.61,4.25)--(11.61,4.75)--(11.18,5)--(10.75,4.75)--(10.75,4.25)--cycle;
\draw (10.75,3.25)--(11.18,3.5)--(11.18,4)--(10.75,4.25)--(10.32,4)--(10.32,3.5)--cycle;
\draw (10.32,2.5)--(10.75,2.75)--(10.75,3.25)--(10.32,3.5)--(9.89,3.25)--(9.89,2.75)--cycle;
\draw (9.89,1.75)--(10.32,2)--(10.32,2.5)--(9.89,2.75)--(9.46,2.5)--(9.46,2)--cycle;
\draw (9.03,1.75)--(9.46,2)--(9.46,2.5)--(9.03,2.75)--(8.60,2.5)--(8.6,2)--cycle;
\draw (8.17,1.75)--(8.60,2)--(8.60,2.5)--(8.17,2.75)--(7.74,2.5)--(7.74,2)--cycle;
\draw (7.31,1.75)--(7.74,2)--(7.74,2.5)--(7.31,2.75)--(6.88,2.5)--(6.88,2)--cycle;
\draw (6.45,1.75)--(6.88,2)--(6.88,2.5)--(6.45,2.75)--(6.02,2.5)--(6.02,2)--cycle;
\draw (5.59,1.75)--(6.02,2)--(6.02,2.5)--(5.59,2.75)--(5.16,2.5)--(5.16,2)--cycle;
\draw (5.16,2.5)--(5.59,2.75)--(5.59,3.25)--(5.16,3.5)--(4.73,3.25)--(4.73,2.75)--cycle;
\draw (4.73,3.25)--(5.16,3.5)--(5.16,4)--(4.73,4.25)--(4.3,4)--(4.3,3.5)--cycle;
\draw (4.3,4)--(4.73,4.25)--(4.73,4.75)--(4.3,5)--(3.87,4.75)--(3.87,4.25)--cycle;
\draw (3.87,4.75)--(4.3,5)--(4.3,5.5)--(3.87,5.75)--(3.44,5.5)--(3.44,5)--cycle;
\draw (3.44,5.5)--(3.87,5.75)--(3.87,6.25)--(3.44,6.5)--(3.01,6.25)--(3.01,5.75)--cycle;
\draw (3.87,6.25)--(4.3,6.5)--(4.3,7)--(3.87,7.25)--(3.44,7)--(3.44,6.5)--cycle;
\draw (4.3,7)--(4.73,7.25)--(4.73,7.75)--(4.3,8)--(3.87,7.75)--(3.87,7.25)--cycle;
\draw (4.73,7.75)--(5.16,8)--(5.16,8.5)--(4.73,8.75)--(4.3,8.5)--(4.3,8)--cycle;
\draw (5.16,8.5)--(5.59,8.75)--(5.59,9.25)--(5.16,9.5)--(4.73,9.25)--(4.73,8.75)--cycle;
\draw (5.59,9.25)--(6.02,9.5)--(6.02,10)--(5.59,10.25)--(5.16,10)--(5.16,9.5)--cycle;
\draw (6.45,9.25)--(6.88,9.5)--(6.88,10)--(6.45,10.25)--(6.02,10)--(6.02,9.5)--cycle;
\draw (7.31,9.25)--(7.74,9.5)--(7.74,10)--(7.31,10.25)--(6.88,10)--(6.88,9.5)--cycle;
\draw (8.17,9.25)--(8.60,9.5)--(8.60,10)--(8.17,10.25)--(7.74,10)--(7.74,9.5)--cycle;
\draw (9.03,9.25)--(9.46,9.5)--(9.46,10)--(9.03,10.25)--(8.60,10)--(8.60,9.5)--cycle;
\draw (9.89,9.25)--(10.32,9.5)--(10.32,10)--(9.89,10.25)--(9.46,10)--(9.46,9.5)--cycle;
\draw (10.75,9.25)--(11.18,9.5)--(11.18,10)--(10.75,10.25)--(10.32,10)--(10.32,9.5)--cycle;
\draw (11.18,8.5)--(11.61,8.75)--(11.61,9.25)--(11.18,9.5)--(10.75,9.25)--(10.75,8.75)--cycle;
\draw (11.61,7.75)--(12.04,8)--(12.04,8.5)--(11.61,8.75)--(11.18,8.5)--(11.18,8)--cycle;
\draw (12.04,7)--(12.47,7.25)--(12.47,7.75)--(12.04,8)--(11.61,7.75)--(11.61,7.25)--cycle;
\draw (12.47,6.25)--(12.90,6.5)--(12.90,7)--(12.47,7.25)--(12.04,7)--(12.04,6.5)--cycle;
\draw (12.90,5.5)--(13.33,5.75)--(13.33,6.25)--(12.90,6.5)--(12.47,6.25)--(12.47,5.75)--cycle;
\draw (13.33,4.75)--(13.76,5)--(13.76,5.5)--(13.33,5.75)--(12.9,5.5)--(12.9,5)--cycle;
\draw (12.90,4)--(13.33,4.25)--(13.33,4.75)--(12.90,5)--(12.47,4.75)--(12.47,4.25)--cycle;
\draw (12.47,3.25)--(12.90,3.5)--(12.90,4)--(12.47,4.25)--(12.04,4)--(12.04,3.5)--cycle;
\draw (12.04,2.5)--(12.47,2.75)--(12.47,3.25)--(12.04,3.5)--(11.61,3.25)--(11.61,2.75)--cycle;
\draw (11.61,1.75)--(12.04,2)--(12.04,2.5)--(11.61,2.75)--(11.18,2.5)--(11.18,2)--cycle;
\draw (11.18,1)--(11.61,1.25)--(11.61,1.75)--(11.18,2)--(10.75,1.75)--(10.75,1.25)--cycle;
\draw (10.75,0.25)--(11.18,0.5)--(11.18,1)--(10.75,1.25)--(10.32,1)--(10.32,0.5)--cycle;
\draw (9.89,0.25)--(10.32,0.5)--(10.32,1)--(9.89,1.25)--(9.46,1)--(9.46,0.5)--cycle;
\draw (9.03,0.25)--(9.46,0.5)--(9.46,1)--(9.03,1.25)--(8.60,1)--(8.6,0.5)--cycle;
\draw (8.17,0.25)--(8.60,0.5)--(8.60,1)--(8.17,1.25)--(7.74,1)--(7.74,0.5)--cycle;
\draw (7.31,0.25)--(7.74,0.5)--(7.74,1)--(7.31,1.25)--(6.88,1)--(6.88,0.5)--cycle;
\draw (6.45,0.25)--(6.88,0.5)--(6.88,1)--(6.45,1.25)--(6.02,1)--(6.02,0.5)--cycle;
\draw (5.59,0.25)--(6.02,0.5)--(6.02,1)--(5.59,1.25)--(5.16,1)--(5.16,0.5)--cycle;
\draw (4.73,0.25)--(5.16,0.5)--(5.16,1)--(4.73,1.25)--(4.3,1)--(4.3,0.5)--cycle;
\draw (4.3,1)--(4.73,1.25)--(4.73,1.75)--(4.3,2)--(3.87,1.75)--(3.87,1.25)--cycle;
\draw (3.87,1.75)--(4.3,2)--(4.3,2.5)--(3.87,2.75)--(3.44,2.5)--(3.44,2)--cycle;
\draw (3.44,2.5)--(3.87,2.75)--(3.87,3.25)--(3.44,3.5)--(3.01,3.25)--(3.01,2.75)--cycle;
\draw (3.01,3.25)--(3.44,3.5)--(3.44,4)--(3.01,4.25)--(2.58,4)--(2.58,3.5)--cycle;
\draw (2.58,4.)--(3.01,4.25)--(3.01,4.75)--(2.58,5)--(2.15,4.75)--(2.15,4.25)--cycle;
\draw (2.15,4.75)--(2.58,5)--(2.58,5.5)--(2.15,5.75)--(1.72,5.5)--(1.72,5)--cycle;
\draw (1.72,5.5)--(2.15,5.75)--(2.15,6.25)--(1.72,6.5)--(1.29,6.25)--(1.29,5.75)--cycle;
\draw (2.15,6.25)--(2.58,6.5)--(2.58,7)--(2.15,7.25)--(1.72,7)--(1.72,6.5)--cycle;
\draw (2.58,7)--(3.01,7.25)--(3.01,7.75)--(2.58,8)--(2.15,7.75)--(2.15,7.25)--cycle;
\draw (3.01,7.75)--(3.44,8)--(3.44,8.5)--(3.01,8.75)--(2.58,8.5)--(2.58,8)--cycle;
\draw (3.44,8.5)--(3.87,8.75)--(3.87,9.25)--(3.44,9.5)--(3.01,9.25)--(3.01,8.75)--cycle;
\draw (3.87,9.25)--(4.3,9.5)--(4.3,10)--(3.87,10.25)--(3.44,10)--(3.44,9.5)--cycle;
\draw (4.3,10)--(4.73,10.25)--(4.73,10.75)--(4.3,11)--(3.87,10.75)--(3.87,10.25)--cycle;
\draw (4.73,10.75)--(5.16,11)--(5.16,11.5)--(4.73,11.75)--(4.3,11.5)--(4.3,11)--cycle;
\draw (5.59,10.75)--(6.02,11)--(6.02,11.5)--(5.59,11.75)--(5.16,11.5)--(5.16,11)--cycle;
\draw (6.45,10.75)--(6.88,11)--(6.88,11.5)--(6.45,11.75)--(6.02,11.5)--(6.02,11)--cycle;
\draw (7.31,10.75)--(7.74,11)--(7.74,11.5)--(7.31,11.75)--(6.88,11.5)--(6.88,11)--cycle;
\draw (8.17,10.75)--(8.60,11)--(8.60,11.5)--(8.17,11.75)--(7.74,11.5)--(7.74,11)--cycle;
\draw (9.03,10.75)--(9.46,11)--(9.46,11.5)--(9.03,11.75)--(8.60,11.5)--(8.60,11)--cycle;
\draw (9.89,10.75)--(10.32,11)--(10.32,11.5)--(9.89,11.75)--(9.46,11.5)--(9.46,11)--cycle;
\draw (10.75,10.75)--(11.18,11)--(11.18,11.5)--(10.75,11.75)--(10.32,11.5)--(10.32,11)--cycle;
\draw (11.61,10.75)--(12.04,11)--(12.04,11.5)--(11.61,11.75)--(11.18,11.5)--(11.18,11)--cycle;
\end{tikzpicture}
 \label{fig:spiral}
 \caption{Spiral benzenoid $S(h)$ on $h$ hexagons has $\cd(S(h)) = \max \{ h-2,  2h-8\}$.}
\end{figure}

It is easy to find the extremal fusenes in the subclass $\classFnb$. A
small example is [6]helicene in Figure~\ref{fig:6helicene}.
\begin{proposition}
  In the class of unbranched catacondensed fusenes $\classFnb$, the
  helicene $51^{h-2}53^{h-2}$ obtains the maximal convexity deficit among
  all $F \in \classFnb$ with $h \geq 2$ hexagons. The maximal convexity
  deficit is $\max\{2h - 7, h - 2\}$.
\end{proposition}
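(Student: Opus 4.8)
Throughout set $M=\max\{2h-7,\,h-2\}$. The starting point is to restate convexity in terms of winding: for a subcode $d$ we have $\suma(d)/\len(d)\ge 2$ exactly when $\win(d)=\suma(d)-2\len(d)\ge 0$, so $\avg(c,m)\ge 2$ iff every cyclically consecutive subcode of length $m$ has nonnegative winding, and $\cd(c)$ is one less than the smallest length admitting no subcode of negative winding. Call a subcode with $\win(d)<0$ \emph{deficient}. I would first record the shape of a code $c$ of an $F\in\classFnb$ with $h$ hexagons: the two terminal hexagons each contribute a single symbol $5$, while every internal hexagon contributes one symbol to each of the two boundary arcs joining the terminals (a $2$ for a linear annelation, a $1$ on the concave side and a $3$ on the convex side for an angular one), so $c=5\,u\,5\,\rho(\overline{u})$ with $u\in\{1,2,3\}^{h-2}$ (here $\overline{u}$ swaps $1\leftrightarrow 3$). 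In particular $c$ contains no symbol $4$, has exactly two symbols $5$ separated by $h-2$ symbols on each side, has equal numbers of $1$'s and $3$'s, $\len(c)=2h-2$, and $\win(c)=6$. By additivity of winding, any arc $d$ and its complementary arc $d^{c}$ satisfy $\win(d)+\win(d^{c})=\win(c)=6$.

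For the upper bound I would show $\avg(c,M+1)\ge 2$, which gives $\cd(F)\le M$. The complement of a subcode of length $M+1$ has length $(2h-2)-(M+1)=\min\{4,h-1\}\le 4$. Since the two $5$'s are separated by $h-2$ symbols on either side, any arc containing both has length at least $h$, whereas the complementary arc has length $\min\{4,h-1\}\le h-1<h$; hence such an arc $d^{c}$ contains at most one $5$, and as there are no $4$'s, $\win(d^{c})=\sum_{s\in d^{c}}(s-2)\le 3+3\cdot 1=6$. Therefore $\win(d)=6-\win(d^{c})\ge 0$ for every subcode $d$ of length $M+1$, so no such subcode is deficient and $\cd(F)\le M$.

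For the matching lower bound I would compute $\cd$ for the helicene $c_{0}=5\,1^{h-2}\,5\,3^{h-2}$. Here the subtlety is that, because $\cd$ is governed by the \emph{smallest} deficiency-free length, I must exhibit a deficient subcode of \emph{every} length $m$ with $1\le m\le M$, not merely one of length $M$. For $1\le m\le h-2$ take the run $1^{m}$ inside the block of $1$'s, with $\win=-m<0$. For $h-1\le m\le 2h-7$ take the arc $1^{h-2}\,5\,3^{\,m-h+1}$, whose winding is $-(h-2)+3+(m-h+1)=m-2h+6\le -1$. These lengths together cover $\{1,\dots,M\}$, so $\avg(c_{0},m)<2$ for all $m\le M$ and hence $\cd(c_{0})\ge M$; with the upper bound this yields $\cd(c_{0})=M$. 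As $c_{0}$ is realised by the (unbranched, catacondensed) helicene, it is extremal and the maximum convexity deficit in $\classFnb$ equals $M=\max\{2h-7,\,h-2\}$.

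I expect the main obstacle to be the first step: justifying rigorously that every code in $\classFnb$ has the claimed form (two terminal $5$'s, interior symbols in $\{1,2,3\}$ paired as $1/3$ across the two arcs, hence no $4$ and $\win(c)=6$), since the convexity-deficit estimates are then essentially bookkeeping with $\win$ and the additivity lemma. A secondary point that needs care is the definitional subtlety noted above, namely that a single long deficient subcode does not by itself force a large $\cd$; one must control \emph{all} short lengths, which is exactly what the explicit family of windows for the helicene achieves.
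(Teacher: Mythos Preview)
Your proof is correct and, in several respects, more careful than the paper's own argument. The paper proceeds by asserting the code shape $5s_{1}\ldots s_{h-2}5\bar{s}_{1}\ldots\bar{s}_{h-2}$, declaring it ``clear'' that the helicene maximises the length of a subcode with average below $2$, solving the inequality $\frac{(h-2)+5+3\ell}{(h-2)+1+\ell}<2$ for the number $\ell$ of $3$'s that can be absorbed, and handling $h<6$ by inspection. Your approach differs in two substantive ways. First, your upper bound via the complementary arc and additivity of winding is a genuine argument where the paper offers only an assertion: bounding $\win(d^{c})\le 6$ on an arc of length $\min\{4,h-1\}$ containing at most one $5$ and no $4$'s gives $\cd(F)\le M$ uniformly for all $F\in\classFnb$ and all $h\ge 2$, with no small-case analysis needed. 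Second, you correctly flag and address the definitional subtlety that the paper glosses over: exhibiting a single deficient subcode of length $M$ does not by itself force $\cd\ge M$, since $\cd$ is the \emph{minimum} $k$ with $\avg(c,k+1)\ge 2$; one must produce a deficient window at every length $1\le m\le M$, which your two explicit families $1^{m}$ and $1^{h-2}5\,3^{m-h+1}$ accomplish. Your concern about the structural description of codes in $\classFnb$ is shared by the paper, which also simply states it; both treatments take this as a known fact about unbranched catacondensed fusenes.
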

\begin{proof}
  Each unbranched catacondensed fusene can be described with a
  boundary-edges code
  $5s_1 s_2 \ldots s_{h-2} 5 \bar{s}_1 \bar{s}_2 \ldots \bar{s}_{h-2}$,
  where $s_{i} + \bar{s}_{i} = 4$ for all $1 \leq i \leq h - 2$. It is
  clear that by setting $s_i = 1$ for all $i = 1, \ldots, h - 2$ we will
  obtain one of the fusenes, let us denote it by $F$, with the longest
  possible subcode $c$ such that $\frac{\suma(c)}{\len(c)} < 2$.

  If $h$ is large enough, then code will contain $h-2$ symbols $1$, symbol
  $5$ and a certain number, let us denote it by $\ell$, of symbols $3$. We
  are looking for the largest possible $\ell$ such that
  \begin{equation}
    \label{pogoj}
    \frac{(h - 2) + 5 + 3\ell}{h - 2 + 1 + \ell} < 2.
  \end{equation}
  Equation \eqref{pogoj} is equivalent to
  \begin{equation}
    \label{pogoj2}
    \ell < h - 5
  \end{equation}
  when $h + \ell > 3$ (this holds for large enough $h$ since
  $\ell \geq 0$). From Equation \eqref{pogoj2} it follows that we can take
  $\ell = h - 6$. This is valid if $h \geq 6$ and the convexity deficit
  equals $(h - 2) + 1 + (h - 6) = 2h - 7$.

  If $h < 6$, then there are only $4$ fusenes to analyse. By manual
  inspection we can see that the convexity deficit in each case is $h - 2$.
\end{proof}

\begin{proposition}
  Among all catacondensed unbranched benzenoids on $h$ hexagons, $S(h)$ has
  the maximal convexity deficit.  The convexity deficit of $S(h)$ is
  $\cd(S(h)) = \max\{ h-2, 2h - 8 \}$.  
\end{proposition}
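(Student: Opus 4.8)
The plan is to reduce the convexity deficit to a single extremal window of the boundary-edges code and then exploit the duality coming from $\win(\code(B)) = 6$. For any finite benzenoid code $c$, write $L(c)$ for the maximal length of a cyclically consecutive subcode $d$ with $\win(d) < 0$ (equivalently $\suma(d)/\len(d) < 2$). Directly from the definition, $\cd(c) \le L(c)$, since no negative window has length $L(c)+1$ and hence $\avg(c, L(c)+1) \ge 2$. Moreover the complement $d^{*}$ of a window satisfies $\win(d) + \win(d^{*}) = 6$, so $d$ is negative exactly when $\win(d^{*}) \ge 7$; writing $m(c)$ for the least length of a window with $\win \ge 7$, this gives $L(c) = \len(c) - m(c)$. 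Since a catacondensed unbranched benzenoid on $h$ hexagons has $\len(\code(B)) = 2h-2$ and its two symbols $5$ (the terminal hexagons) are non-adjacent for $h \ge 3$, the whole problem reduces to bounding $m(B)$ below over $\classBnb$ and to evaluating it for the spiral.

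The uniform bound I would establish is $m(B) \ge \min\{h, 6\}$ for every $B \in \classBnb$. A window contributes $-1, 0, +1, +3$ per symbol $1, 2, 3, 5$, and for $h \ge 6$ a window of length at most $5$ meets at most one of the two $5$'s; four symbols then give winding at most $3 + 3 = 6$, and five symbols reach $7$ only with one $5$ and four $3$'s. Because each $5$ is flanked on its $\rho b$-side by a symbol $\le 2$, those four $3$'s must all lie on the $a$-side, so such a window is literally the substring $53333$ (or its mirror $33335$). For $h \le 5$ the two $5$'s lie at distance $h-1$, so no window shorter than $h$ can reach $\win \ge 7$, giving $m(B) \ge h$ by inspection. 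Hence $m(B) \ge \min\{h,6\}$ as soon as one knows that no code in $\classBnb$ contains $53333$, and therefore $\cd(B) \le (2h-2) - \min\{h,6\} = \max\{h-2, 2h-8\}$.

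The crux — and the step I expect to be the main obstacle — is the geometric lemma that no catacondensed unbranched benzenoid contains $53333$. Read from the $5$, this substring describes a terminal hexagon followed by four angular hexagons annellated in the same rotational sense; the chain then turns through $4\cdot 60^{\circ}=240^{\circ}$ and its next hexagon is forced onto an already occupied cell, which is precisely the non-embeddability of $[6]$helicene, whose code is $5333351111$. I would make this rigorous by placing hexagon centres on the triangular lattice, expressing each successive centre as a unit step rotated by the accumulated turning angle, and exhibiting the collision; alternatively one may invoke the known classification of realisable boundary-edges codes. It is essential here that three equal consecutive turns remain admissible (as in $[5]$helicene $53335111$), which is exactly why the spiral is built from the blocks $(2^{k}3)^{3}$ rather than from longer runs of $3$.

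It remains to show $S(h)$ attains the bound. Using that $a(\ell)$ and $\rho b(\ell)$ are complementary ($3 \leftrightarrow 1$, $2 \leftrightarrow 2$), with $\ell = h-2$, I consider the nested suffixes of $\code(S(h)) = 5\,a(\ell)\,5\,\rho b(\ell)$ read inward from the right end. Across $\rho b(\ell)$ (only $1$'s and $2$'s) the winding is non-increasing down to $-T$, where $T$ counts the $1$'s; absorbing the middle $5$ raises it to $-T+3$; and continuing into the tail of $a(\ell)$ (only $2$'s and $3$'s) it is non-decreasing. For $h \ge 7$ it returns to $-1$ exactly when everything except the first six symbols $533323$ is included, i.e. at length $2\ell-4 = 2h-8$, after which the symbol $a_{5}=3$ brings it to $0$. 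Checking the two monotone stretches shows $\win < 0$ for every suffix length up to $\max\{h-2, 2h-8\}$, so negative windows of all these lengths exist and $\cd(S(h)) \ge \max\{h-2, 2h-8\}$; with the uniform upper bound this gives equality and the extremality of $S(h)$. The regime $h \le 6$, where absorbing a $5$ already makes the winding non-negative so that the longest negative window sits inside $\rho b(\ell)$ and has length $h-2$, together with the base case $h=2$ (code $55$), are checked directly.
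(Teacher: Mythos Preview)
Your proof is correct and follows essentially the same route as the paper's. Both arguments pivot on the complement duality $\win(d)+\win(d^{*})=6$, reduce the upper bound to showing that no length-$5$ window of an unbranched benzenoid can have winding $\ge 7$, and then dispose of the unique candidate $53333$ (equivalently $33335$) via the non-embeddability of $[6]$helicene. Your introduction of $L(c)$ and $m(c)$ makes the duality explicit, whereas the paper argues by contradiction on a putative $B$ with $\cd(B)=2h-7$; the content is the same.

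One phrasing to tighten: when you write ``each $5$ is flanked on its $\rho b$-side by a symbol $\le 2$'', the labels $a$-side/$\rho b$-side are specific to the spiral, not to a general $B\in\classBnb$. The fact you actually need (and which the paper uses when it forbids the subcode $353$) is that in the code $5\,s_1\ldots s_{h-2}\,5\,\bar s_{h-2}\ldots\bar s_1$ the two cyclic neighbours of each $5$ are the two boundary segments of the \emph{same} interior hexagon and therefore sum to $4$; hence at least one neighbour is $\le 2$, and the four $3$'s in a winding-$7$ window must all sit on one side of the $5$, forcing the pattern $53333$. Once this is said cleanly, your reduction to the $[6]$helicene obstruction is exactly the paper's ``$53333$ cannot be a subcode of a benzenoid''. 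Your lower-bound analysis for $S(h)$ --- tracking the winding of the nested suffixes through $\rho b(\ell)$, across the middle $5$, and into the tail of $a(\ell)$ --- is more detailed than the paper's one-line ``it is easy to see that the above also holds for all prefixes of the code $c$'', and your separate treatment of $h\le 6$ fills a gap the paper leaves implicit.
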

\begin{proof}
  First, observe that the code $533323$ is a subcode of $S(h)$ for all
  $h \geq 7$.

  We know that $\suma(S(h)) 
  = 4h + 2$ and $\len(S(h)) 
  = 2h - 2$.

  Let $c$ be the subcode of $\code(S(h))$ that is obtaned from
  $\code(S(h))$ by erasing $533323$ (i.e.\ $c = 11121\ldots$). Then
  $\suma(c) = 4h + 2 - 19 = 4h - 17$ and $\len(c) = 2h - 2 - 6 =
  2h-8$. Therefore
  \begin{equation*}
    \frac{\suma(c)}{\len(c)} = \frac{4h-17}{2h-8} < 2.
  \end{equation*}
  It is easy to see that the above also holds for all prefixes of the code
  $c$. This implies that $\cd(S(h)) \geq 2h - 8$ for all $h \geq 7$. Let us
  now show that there exists no unbranched benzenoid $B$ such that
  $cd(B) = 2h-7$.

  For contradiction, suppose that there exists an unbranched benzenoid $B$
  such that $\cd(B) = 2h - 7$.  Let $c$ be the code which remains when the
  maximal subcode $d$, for which $\frac{\suma(d)}{\len(d)} < 2$, is erased
  from $\code(B)$. We have $\len(c) = 5$ and
  \begin{equation*}
    \frac{4h + 2 - \suma(c)}{2h - 7} < 2.
  \end{equation*}
  If $h$ is large enough, we obtain $\suma(c) > 16$.  The code $c$ contains
  $5$ symbols, each of which is an element of the set $\{1, 2, 3, 5\}$.
  One of the symbols must be $5$ (otherwise the sum can not be greater than
  $16$). Also, the code cannot contain symbol $5$ twice if the benzenoid
  is large enough (their corresponding hexagons are located at the opposite
  ends of the chain). From $\suma(c) > 16$ it follows that all the other
  symbols have to be $3$. The code $353$ can not be a subcode of a
  benzenoid due to geometric restrictions. The only remaining option is
  $c = 53333$, which again can not be a subcode of a benzenoid, a
  contradiction.

  Therefore, $S(h)$ attains the maximal convexity deficit among all
  unbranched benzenoids on $h$ hexagons.
\end{proof}

\subsection{Conclusion}

In this contribution we have briefly revisited several families of
benzenoids that have been studied in the past.  Most of them are taken from
the book by Cyvin and Gutman~\cite[p.~62]{cyvin_1988}.  Here they are
defined rigorously by the boundary-edges code instead of relying on
pictorial representations.
We considered extremal benzenoids with respect to convexity deficit.
Table~\ref{table:computerResults} summarises all small cases up to $18$ hexagons.
BECs of these benzenoids are stored in \cite{gitHubNino}. We observed 
from these data that
some clear patterns emerged.

In particular, let $F(h, k)$ denote the number of
  benzenoids on $h$ hexagons having convexity deficit equal to $k$. 
Note that $F(h, \mcd(h)) = \ex(h)$ and $F(h, k) = 0$ for all $k > \mcd(h)$.
\begin{conjecture}
Let $h \geq 0$.
The sequence 
$$
F(h, 0), F(h, 1), F(h, 2), \ldots, F(h, \mcd(h))
$$
is unimodal.
\end{conjecture}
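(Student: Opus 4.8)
The plan is to prove the stronger statement that the sequence $F(h,0), F(h,1), \ldots, F(h,\mcd(h))$ is \emph{log-concave} with no internal zeros, since log-concavity together with strict positivity across the whole range implies unimodality. Equivalently, writing the generating polynomial
\[
  P_h(x) = \sum_{k=0}^{\mcd(h)} F(h,k)\, x^k,
\]
it would suffice to show that $P_h$ has only real (hence nonpositive) roots, because a polynomial with nonnegative coefficients and only real roots has log-concave coefficients. Before committing to the stronger target I would test real-rootedness of the $P_h$ numerically on the data behind Table~\ref{table:computerResults}, to decide whether to push for real-rootedness or settle for unimodality directly.

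For a direct combinatorial attack I would aim at log-concavity through an injection. Fix $h$ and $k$ with $1 \le k \le \mcd(h)-1$ and seek an injection
\[
  \{\,B : \hex(B)=h,\ \cd(B)=k-1\,\}\times\{\,B' : \hex(B)=h,\ \cd(B')=k+1\,\}\ \hookrightarrow\ \{\,B : \hex(B)=h,\ \cd(B)=k\,\}^2 .
\]
By the definition of convexity deficit, a benzenoid with $\cd=k+1$ carries a cyclic window of length $k+1$ whose average is below $2$ (a run rich in $1$'s), whereas one with $\cd=k-1$ has no window of length $k$ of average below $2$; the two members of a pair thus deviate from the target value $k$ in opposite directions. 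The intended mechanism is a \emph{bend transfer}: excise a short ``armchair/zig-zag'' segment — realised by one of the local code substitutions that appear in the hexagon-addition rules behind the winding lemma — from the high-deficit code and splice it into the low-deficit code, producing two codes whose longest below-average windows both have length $k$. Additivity of $\win$ (our first lemma) keeps the winding equal to $6$ throughout, so the basic necessary condition for being a benzenoid is preserved for free.

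The recursive route is the alternative I would pursue in parallel. Each benzenoid on $h$ hexagons arises from one on $h-1$ hexagons by a one-, two-, three-, four-, or five-contact addition, and the code-substitution rules used in the proof of the winding lemma tell us precisely how $\cd$ changes under each addition type. I would aggregate these into a recurrence expressing $P_h$ as the image of $P_{h-1}$ under an explicit linear operator, and then try to show that this operator preserves real-rootedness, for instance by exhibiting root interlacing between $P_{h-1}$ and $P_h$ or by fitting it into the P\'{o}lya--Schur characterisation of real-rootedness-preserving operators.

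The hard part is the same in both approaches and is intrinsic to benzenoids rather than to codes: not every code is the boundary-edges code of a benzenoid, and $\cd$ depends on the \emph{global} cyclic code, so a local edit that raises or lowers the deficit by one need not correspond to an embeddable benzenoid on the same number of hexagons. Controlling this rigidity — guaranteeing that the bend transfer, or the addition operator, can always be realised by genuine benzenoids — is where the argument must do its real work; the purely code-theoretic bookkeeping is routine by comparison. A secondary but necessary step is ruling out internal zeros, i.e.\ proving $F(h,k)>0$ for every $0 \le k \le \mcd(h)$; here I would interpolate between the explicit families of Table~\ref{table:family} (linear, two- and three-segment, the helicene-type codes, and the spiral $S(h)$), which already realise deficits from $0$ up to the maximum, to exhibit at least one benzenoid of each intermediate deficit.
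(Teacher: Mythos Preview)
The statement you are addressing is a \emph{Conjecture} in the paper, not a theorem: the paper offers no proof whatsoever, only the remark that the motivation ``comes from computational investigation for small values of $h$''. There is therefore no paper proof to compare against, and any argument you give would be a genuine new result.

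What you have written, however, is not a proof but a research programme with its central difficulties explicitly left open. You yourself identify the crux --- that a local code edit need not yield an embeddable benzenoid on the same number of hexagons --- and then stop there; that is precisely the content a proof would have to supply. A few more specific gaps: (i) the strengthening to log-concavity or real-rootedness of $P_h$ is pure speculation, and you concede you would first have to test it on the data before even committing to it; (ii) the ``bend transfer'' injection is never defined --- no concrete pair of code substitutions is given, no argument that the resulting codes are benzenoid BECs, and no argument that the edit changes $\cd$ by exactly one on each side; (iii) the recursive route via hexagon addition is not well-posed as stated, since a benzenoid on $h$ hexagons typically arises from \emph{several} benzenoids on $h-1$ hexagons, so there is no single linear operator taking $P_{h-1}$ to $P_h$ without first handling this overcounting, and moreover the change in $\cd$ under a $k$-contact addition is not a fixed local rule but depends on the global code; (iv) the ``no internal zeros'' step is asserted by gesturing at the families in Table~\ref{table:family}, but those families do not obviously realise every deficit value between $0$ and $\mcd(h)$ for every $h$. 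In short, each branch of the plan bottoms out at an unresolved problem of roughly the same difficulty as the conjecture itself.
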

The motivation for the previous statement comes from computational investigation
for small values of $h$.

Our empirical studies show an interesting picture of extremal
benzenoids. It seems that:
\begin{enumerate}[label=(\arabic*)]
\item there is only one extremal benzenoid that is pericondensed;
\item there are only finitely many extremal unbranched pericondensed
  benzenoids;
\item all extremal benzenoids for $h \geq 14$ are branched;
\item there is no upper bound on the number of branched points of extremal
  benzenoids when $h$ tends to infinity.
\end{enumerate}
These observations could be formulated as conjectures and
are a subject of further research.

\section*{Acknowledgements}

This work was supported in part by the Slovenian Research Agency (grants P1-0294, N1-032, 
and J1-7051), the German Research Foundation (grant
STA 850/19-2 within SPP 1738), and bilateral Slovenian-German project
``Mathematical Foundations of Selected Topics in Science''.

This paper was conceived at the Workshop on Discrete and Computational
Biomathematics, and Mathematical Chemistry, Koper, Slovenia, 15 -- 17 November
2017, organised by the Slovenian Discrete and Applied Mathematics Society
and University of Primorska, FAMNIT, and completed at the 33rd TBI
Winterseminar, Bled, Slovenia, 11 -- 17 February 2018.

\bibliographystyle{amcjoucc}
\bibliography{convA}

\end{document}